\documentclass[11pt]{amsart}  

\usepackage{amsbsy,amsmath,amsthm,amssymb,color,verbatim,ulem,color}

\usepackage{mathtools}
\usepackage{soul}
\usepackage{cancel,shuffle}
\usepackage{stackrel}
\usepackage{fdsymbol}
\usepackage{multicol}
\usepackage[all]{xy}
\usepackage{xargs}

\usepackage[backrefs]{amsrefs}
\usepackage{float}
\usepackage{bm}
\usepackage{fullpage,cancel,hyperref}
\usepackage{float}
\usepackage{wrapfig}
\usepackage[dvipsnames]{xcolor}
\usepackage{graphicx}
\usepackage{subcaption}
\usepackage{multicol}
\usepackage{multirow}
\usepackage{longtable}
\usepackage{makecell}
\usepackage{supertabular}
\usepackage{tikz}
\tikzstyle{vertex}=[circle, draw, inner sep=0pt, minimum size=4pt]

\tikzstyle{vtx}=[circle, draw, inner sep=0pt, minimum size=8pt]
\usetikzlibrary{decorations.pathreplacing}

\definecolor{darkgreen}{cmyk}{.9,0,.9,.2}
\definecolor{midgray}{gray}{0.60}
\definecolor{lightgray}{gray}{0.90}
\definecolor{lmgray}{gray}{0.70}
\usepackage{colortbl}
\usetikzlibrary{arrows.meta,calc,decorations.markings,math,arrows.meta}

\tikzset{->-/.style={decoration={
  markings,
  mark=at position #1 with {\arrow{>}}},postaction={decorate}}}
  \tikzset{-<-/.style={decoration={
  markings,
  mark=at position #1 with {\arrow{<}}},postaction={decorate}}}


\theoremstyle{theorem} 
\newtheorem{thm}{Theorem}[section] 
\newtheorem{prop}[thm]{Proposition}
\newtheorem{lem}[thm]{Lemma} 
\newtheorem{cor}[thm]{Corollary}

\theoremstyle{definition}                 
\newtheorem{defn}[thm]{Definition}

\theoremstyle{example}                      

\theoremstyle{remark}                      
\newtheorem{exmp}[thm]{Example}  
\newtheorem{rem}[thm]{Remark}

\numberwithin{equation}{section}

\title{Enumerating $k$-Naples Parking Functions\\Through Catalan Objects}
\author{Jo\~ao Pedro Carvalho}
\address[J.~P. Carvalho]{Haverford College, Haverford, PA 19041}
\email{\textcolor{blue}{\href{mailto:jdecarvalh@haverford.edu}{jdecarvalh@haverford.edu}}}
\author{Pamela E. Harris}
\address[P.~E. Harris]{Department of Mathematics and Statistics, Williams College, Williamstown, MA 01267}
\email{\textcolor{blue}{\href{mailto:peh2@williams.edu}{peh2@williams.edu}}}
\author{Gordon Rojas Kirby}
\address[G.~Rojas Kirby]{School of Mathematical and Statistical Sciences, Arizona State University, Tempe, AZ 85281}
\email{\textcolor{blue}{\href{mailto:}{girkirby@asu.edu}}}
\author{Nico Tripeny}
\address[N. Tripeny]{Haverford College, Haverford, PA 19041}
\email{\textcolor{blue}{\href{mailto:ntripeny@haverford.edu}{ntripeny@haverford.edu}}}
\author{Andr\'es~R.~Vindas-Mel\'endez}
\address[A.~R.Vindas-Mel\'endez]{Mathematical Sciences Research Institute \& Department of Mathematics, University of California, Berkeley, CA 94720}
\email{\textcolor{blue}{\href{mailto:avindas@msri.org}{avindas@msri.org}}}
\date{}


\begin{document}

\maketitle

\begin{abstract}
    
    This paper studies a generalization of parking functions named $k$-Naples parking functions, where backward movement is allowed. One consequence of backward movement is that the number of ascending $k$-Naples is not the same as the number of descending $k$-Naples. This paper focuses on generalizing the bijections of ascending parking functions with combinatorial objects enumerated by the Catalan numbers in the setting of both ascending and descending $k$-Naples parking functions. These combinatorial objects include Dyck paths, binary trees, triangulations of polygons, and non-crossing partitions. Using these bijections, we enumerate both ascending and descending $k$-Naples parking functions.
    
\end{abstract}

\section{Introduction}\label{sec:Introduction}
Parking functions are special types of integer sequences that were proposed independently by Ronald Pyke \cite{Pyke} as well as by Alan Konheim and Benjamin Weiss \cite{KonheimAndWeiss} in order to study hashing problems in computer science. 
If we have a sequence of $n$ integers all belonging to $[n]:=\{1,2,\dots, n\}$, we call it a \textbf{parking preference} of length $n$.
A \textbf{parking function} of length $n$ is a special type of parking preference $(a_1,a_2,\dots,a_n)$, that allows $n$ cars $c_1,c_2,\dots,c_n$ with respective preferences $a_1,a_2,\dots,a_n$ to park in a one-way street with $n$ consecutively ordered parking spots according to the following rules:

\begin{enumerate}
    \item $c_1$ parks in its preferred spot;
    \item Every new car parks in its preferred spot if it is not occupied, otherwise it parks in the next available spot.
\end{enumerate}
For example, the parking preference $(2,2,1,4)$ is a parking function of length 4, where $c_1$ parks in the second spot, $c_2$ in the third, $c_3$ in the first, and $c_4$ in the fourth.

Special subsets of parking functions are the monotonic ones, that correspond to \textit{ascending} (weakly increasing) or \textit{descending} (weakly decreasing) parking preferences. 
The set of ascending parking functions of length $n$, as well as the set of descending parking functions of length $n$, are counted by the Catalan numbers.
There are many well-known bijections between either of these subsets of parking functions and a variety of Catalan objects. 
Figure \ref{fig:CatalanObjects} illustrates some of these Catalan objects that correspond to the ascending parking function $(1,2,2,4)$, i.e., the ascending rearrangement of the parking function $(2,2,1,4)$, which include Dyck paths, binary trees, triangulations of $n$-gons, and non-crossing partitions of the set $[n]$.
We remark that the number of ascending and descending parking functions is the same follows from the fact that if a given parking preference is a parking preference, then so are all of its rearrangements.

\begin{figure}[h]
    \centering
\begin{tikzpicture}
\begin{scope}[scale=0.5, shift=(-90:2.5)]
	\draw[blue, very thick] (0,0) to (8,0);
	\draw[very thick] (0,0) to (1,1) to (2,0) to (4,2) to (6,0) to (7,1) to (8,0);

	\fill (0,0) circle (4pt);
	\fill (1,1) circle (4pt);
	\fill (2,0) circle (4pt);
	\fill (3,1) circle (4pt);
	\fill (4,2) circle (4pt);
	\fill (5,1) circle (4pt);
	\fill (6,0) circle (4pt);
	\fill (7,1) circle (4pt);
	\fill (8,0) circle (4pt);
\end{scope}

\begin{scope}[shift=(0:5.5), scale=0.75]
	\draw[very thick] 
		(0,0) to (2,-2)
		(1,-1) to (0,-2)
		;
		
	\fill (0,0) circle (3pt);
	\fill (1,-1) circle (3pt);
	\fill (2,-2) circle (3pt);
	\fill (0,-2) circle (3pt);
	
\end{scope}

\begin{scope}[shift=($(-90:3)+(0:2)$),scale=.5]
	\draw[very thick](0:2) to (60:2) to (120:2) to (180:2) to (-120:2) to (-60:2) to (0:2) to ((60:2);
	
	\draw[line width=1.5pt, red] (60:2) to (120:2);
	
	\draw[very thick](60:1.95) to (180:1.95) to (-60:1.95) to (60:1.95) to (180:1.95)
		;
	
\end{scope}

\begin{scope}[shift=($(-90:3)+(0:6)$), scale=.5]
	\draw[very thick] (1,1) to (-1,-1);		
	
	\fill (1,1) circle	 (4pt) node[above right]{\Large $2$};
	\fill (-1,1) circle	 (4pt) node[above left]{\Large $1$};
	\fill (-1,-1) circle(4pt) node[below left]{\Large $3$};
	\fill (1,-1) circle	 (4pt) node[below right]{\Large $4$};
	
\end{scope}
\end{tikzpicture}
    \caption{Catalan objects corresponding to the parking function $(1,2,2,4)$.}
    \label{fig:CatalanObjects}
\end{figure}
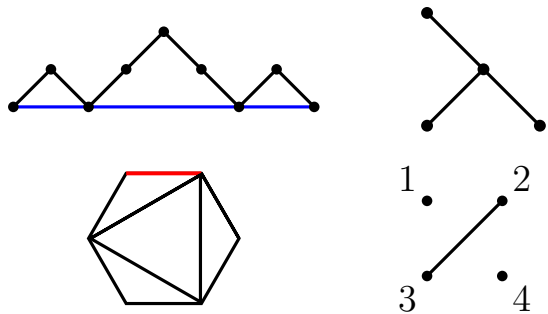

Many generalizations to parking functions were proposed throughout the years, including allowing cars with different lengths and starting with some spots already filled, and a reader interested in exploring all the different directions may find \cite{parkingadventure} a useful resource.  
In this paper, we focus our attention
on the generalization known as \textbf{Naples parking functions}. 
First proposed in \cite{BaumgardnerHonors}, Naples parking functions is different from parking function because cars would go to their preferred spot, and if this spot was already occupied, they then first check one spot before, parking there if available or moving forward if that space is occupied. 
Similarly, a \textbf{$k$-Naples parking function} would allow cars to check up to $k$ spots preceding their parking preference, in decreasing order, before moving forward. 
For example, the parking preference $(6,6,6,5,5,2,1)$ is a $2$-Naples parking function as the cars $c_1,\dots, c_7$ park in positions $6,5,4,3,7,2,1$, respectively, with the second car moving back once and the third and fourth cars moving back twice since both their preferred spot and the one directly behind it are taken. 
We see this is not a $1$-Naples parking function as car $c_5$ cannot park in that setting. 
Also, we remark that we refer to $1$-Naples parking functions simply as Naples parking functions. 
Similarly, $0$-Naples parking functions are just the set of parking functions.
However, unless otherwise specified, we adopt the convention that $k\geq 1$ for all $k$-Naples functions considered in this paper. 

We now summarize our main results in this paper:
\begin{itemize}
    \item As we commented previously, all rearrangements of a parking function are parking functions. In Section \ref{sec:Rearrangements} we answer the analogous question for $k$-Naples parking functions, by establishing that given a parking preference, all of its rearrangements are $k$-Naples parking functions if and only if its ascending rearrangement is a $k$-Naples parking function. This is the statement of Theorem \ref{thm:Rearrangements}.
\item We then restrict our study to ascending and descending $k$-Naples parking functions and in sections \ref{sec:DyckPaths} and \ref{sec:BinaryTrees}, we present bijections between ascending and descending $k$-Naples parking functions and families of Dyck paths and binary trees, respectively. 
\item In Section \ref{sec:Monotonic} we use the bijections found in the previous sections to give formulas to enumerate ascending and descending $k$-Naples parking functions. These results give connections to Fine numbers (\textcolor{blue}{\href{https://oeis.org/A000957}{A000957}}) and convolution of the Catalan numbers with the Fine numbers (\textcolor{blue}{\href{https://oeis.org/A000958}{A000958}}).
\item We also consider bijections between ascending and descending $k$-Naples parking functions and other Catalan objects. This is the content of Section \ref{sec:OtherBijections}. 
\end{itemize}
Following these results, we conclude the article in Section \ref{sec:FutureWorks} by detailing some future directions of research on these topics. 



\section{Rearrangements of $k$-Naples Parking Functions}\label{sec:Rearrangements}

An interesting question regarding $k$-Naples parking functions is if all the rearrangements of $k$-Naples parking functions remain $k$-Naples parking functions, since this is true for traditional parking functions. 
However, this is not the case, as we illustrate in Example \ref{exmp:BadRearrangement}. 
The bijective correspondence between ascending and descending parking functions does not hold in the $k$-Naples case for $k>0$. This motivates us to study both ascending and descending $k$-Naples parking functions separately to understand their similarities and differences in the hopes of achieving a better understanding of general $k$-Naples parking functions.

\begin{exmp}\label{exmp:BadRearrangement}
We have $(6,6,6,5,5,2,1)$ as a descending parking preference of length $7$ and we have $(1,2,5,5,6,6,6)$ as its corresponding ascending parking preference. From above, we see $(6,6,6,5,5,2,1)$ is a descending $2$-Naples parking function. We also can see that $(1,2,5,5,6,6,6)$ is an ascending $3$-Naples parking function, but not an ascending $2$-Naples parking function. 
\end{exmp}


As a starting point to begin exploring exactly when  rearrangements of a $k$-Naples parking function are still $k$-Naples, we define a slightly modified way of writing parking preferences that can capture the position of cars step by step as they park. 
Then, we prove a lemma that expose small modifications in the parking preference that conserve its status as a parking function.

\begin{defn}
An \textit{$i$-filled} parking preference
of length $n$ is an ordered pair of sequences in $[n]$,  of the form $((d_1,\dots,d_i),(a_{i+1},\dots, a_n))$, such that each $d_j$ for $1\leq j\leq i$ represents the spot in which car $c_j$ has already parked, and each $a_j$ for $i<j\leq n$ represents the preference of car $c_j$ that has yet to park. 
If all cars can park using the $k$-Naples rules we call this an \textit{$i$-filled} $k$-Naples parking function.
\end{defn}

\begin{lem}\label{lem:t1}
If $P_1=(a_1, a_2,\ldots,a_n)$ is a $k$-Naples parking function with cars parking in spots $(d_1,\ldots, d_n)$ and we consider the $i$-filled parking preference $P_2=((p_1,\ldots,p_i),(a_{i+1},\ldots,a_n))$ with $p_j=d_j$ for all but exactly one $l\leq i$ where $p_l<d_l$, then $P_2$ is a $i$-filled $k$-Naples parking function.
\end{lem}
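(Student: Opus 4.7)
The plan is to compare the parking processes for $P_1$ and $P_2$ step by step using a ``prefix-count discrepancy.'' For $j \in \{i,\ldots,n\}$, let $R_j$ be the set of occupied spots after $j$ cars have parked in $P_1$ (so $R_i=\{d_1,\ldots,d_i\}$), and let $Q_j$ be the analogous set for $P_2$, starting from $Q_i=(R_i \setminus \{d_l\}) \cup \{p_l\}$. Define
\[
w_j(t) = |Q_j \cap [1,t]| - |R_j \cap [1,t]|, \qquad t \in \{0,1,\ldots,n\}.
\]
Since $p_1,\ldots,p_i$ represent the spots of distinct cars in $P_2$, we have $p_l \notin R_i$; combined with $p_l < d_l$, a direct computation gives $w_i(t)=1$ for $t \in [p_l,d_l-1]$ and $w_i(t)=0$ elsewhere, so $w_i$ takes values only in $\{0,1\}$.

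I would then prove by induction on $j \geq i$ the combined statement that (a)~car $c_j$ parks successfully in $P_2$, and (b)~$w_j$ takes values only in $\{0,1\}$. Step (a) is short: if $c_j$ failed in $P_2$, the whole range $[\max(1,a_j-k),n]$ would be occupied in $Q_{j-1}$, while in $R_{j-1}$ the same range is missing the spot $d_j$ where $c_j$ parks in $P_1$. Comparing prefix-counts on this range, together with $w_{j-1}(n)=0$, forces $w_{j-1}(\max(1,a_j-k)-1) \leq -1$, contradicting the inductive hypothesis (b).

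The main obstacle is (b). Writing $d'_j$ for the spot where $c_j$ parks in $P_2$, one has
\[
w_j(t) - w_{j-1}(t) = \begin{cases} +1 & \text{if } d'_j \leq t < d_j, \\ -1 & \text{if } d_j \leq t < d'_j, \\ 0 & \text{otherwise}, \end{cases}
\]
so (b) can fail only when $w_{j-1}(t)=1$ with $d'_j \leq t < d_j$ (giving $w_j(t)=2$), or when $w_{j-1}(t)=0$ with $d_j \leq t < d'_j$ (giving $w_j(t)=-1$). I would dispose of each scenario by casework on whether $d_j$ and $d'_j$ lie in the backward range $[\max(1,a_j-k),a_j]$ or the forward range $[a_j+1,n]$ of the $k$-Naples scanning order. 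The minimality of $d_j$ in $R_{j-1}$ (resp.~of $d'_j$ in $Q_{j-1}$) in that scanning order forces a contiguous block of positions to lie in $R_{j-1}$ (resp.~$Q_{j-1}$), and the inductive hypothesis $w_{j-1}\in\{0,1\}$ then pins the walk on that block to a constant value inconsistent with the assumed discrepancy, closing the contradiction. The delicate subcase is when $d_j$ and $d'_j$ sit on opposite sides of the scan transition, because there one additionally needs the full saturation $[\max(1,a_j-k),a_j] \subseteq R_{j-1}$ (or $\subseteq Q_{j-1}$) forced by whichever process has exhausted its backward options. Once (a) and (b) hold for all $j \leq n$, every car in $P_2$ has parked, which is exactly the conclusion of the lemma.
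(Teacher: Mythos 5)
Your proposal is correct, and it takes a genuinely different route from the paper. The paper also compares the two processes car by car, but its invariant is set-theoretic: after each step the two occupied sets differ by at most a single ``transposition,'' i.e.\ $Q_j=(R_j\setminus\{d''\})\cup\{d'\}$ with $d'<d''$, and the induction is a strong (backwards) induction on the number of cars left to park, re-invoking the lemma itself on the resulting $(j)$-filled configuration. Your prefix-count invariant $w_j\in\{0,1\}$ is strictly weaker (e.g.\ $R=\{2,4\}$, $Q=\{1,3\}$ satisfies it with symmetric difference of size four), but it is still strong enough for step (a), and I checked that it does propagate: in each of the scenarios, writing $m=\max(1,a_j-k)$, minimality in the scan order $a_j\prec a_j-1\prec\cdots\prec m\prec a_j+1\prec\cdots\prec n$ forces either $[\,\min(d_j,d'_j)+1,\,a_j]$ or $[m,\,\max(d_j,d'_j)-1]$ to be fully occupied in the relevant set, and anchoring the walk at the free endpoint ($d_j\notin R_{j-1}$ or $d'_j\notin Q_{j-1}$) yields $w_{j-1}(t)\le 0$ in scenario (i) and $w_{j-1}(t)\ge 1$ in scenario (ii), exactly as you sketch; the mixed subcase where one spot is in $[m,a_j]$ and the other in $[a_j+1,n]$ indeed needs the full saturation of the backward range, which is available precisely because the relevant process has moved on to forward scanning. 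What your approach buys is a mechanical, fully local update rule (a $\pm1$ lattice-walk step) whose case analysis, while eight-fold, is each time a two-line count; what the paper's approach buys is a tighter structural description of how the two lots differ (a single displaced car), at the cost of a more delicate verification that this stronger invariant survives each step -- a verification the paper's own write-up treats somewhat tersely. Either invariant closes the lemma, since both give $w_{j-1}\ge 0$, which is all that the failure-to-park contradiction requires.
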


\begin{proof}
We prove this by induction on $n-i$. Let $n-i=1$ or $i=n-1$. Suppose that $P_2$ is an $(n-1)$-filled parking preference $((p_1,\dots, p_{n-1}),(a_n))$ so that,  according to this preference, cars $c_1,\dots, c_{n-1}$ park in spots $p_1,\dots p_{n-1}$. By assumption $p_j=d_j$ for all $j\leq i$ except for one $1\leq l\leq n-1$ where $p_l<d_l$. Since all cars must park in distinct spots we must have $d_n=p_l<d_l$ so that $c_n$ spot $d_l$ is unoccupied when $c_n$ goes to park and $c_n$ is able to park in spot $d_l$. Thus, $P_2$ is an $(n-1)$-filled parking preference.

Now, suppose the same is true for every $k<i<n$ and suppose that $P_2$ is an $k$-filled parking preference $((p_1,\dots, p_k),(a_{k+1},\dots a_n))$ so that,  according to this preference, cars $c_1,\dots, c_{k}$ park in spots $p_1,\dots p_{k}$. By assumption $p_j=d_j$ for all but exactly one $l\leq i$, where $p_l<d_l$. Thus, $p_l=d_j$ for some $j>k$. Now consider where car $c_{k+1}$ parks according to $P_2$. 

If $p_l=d_{k+1}$ then $d_l$ is unoccupied when $c_{k+1}$ tries to park. By assumption $d_{k+1}<d_l$, which forces $c_{k+1}$ to park at spot $d_l$ or earlier. If $c_{k+1}$ parks in spot $d_l$ then when cars $c_{k+2},\dots,c_n$ go to park according to $P_2$ they find spots $d_1,\dots, d_{k+1}$ occupied and park in spots $d_{k+2},\dots d_n$ respectively. Thus, we may assume that $c_{k+1}$ parks  between spots $d_{k+1}$ and $d_l$. Thus, the collection of spots $X$ occupied by cars $c_1,...,c_{k+1}$ differs as a set from  $X'=\{d_1,\dots, d_{k+1}\}$ by one element. Say $X\setminus X'=d'$ and $X'\setminus X=d''$ so that $d'<d''$. Then we can arrange the these spots into a $(k+1)$-filled parking preference satisfying our inductive hypothesis so that it is a $(k+1)$-filled $k$-Naples parking function. But that means that cars $c_{k+2},\dots c_n$ are able to park based on how cars $c_1,...,c_{k+1}$ have filled the lot according to $P_2$ so that $P_2$ is a $k$-filled Naples parking function.

If $p_l\neq d_{k+1}$ then it must be the case that $p_l=d_j$ for some $j>k+1$. 
Then when $c_{k+1}$ goes to park it can either park in $d_l$ or some earlier spot.
If it parks in $d_l$ we fall into the same situation as above of having a $(k+1)$-filled $k$-Naples parking function. 
If it parks in some earlier spot, this spot would have also been available to $c_{k+1}$ when it tried to park according to $P_1$ and thus is a contradiction.
\end{proof}

Lemma \ref{lem:t1} plays a key role in the following results about rearrangements of $k$-Naples parking functions.

\begin{thm}\label{thm:Rearrangements}
Given a parking preference, all of its rearrangements are $k$-Naples parking functions if and only if its ascending rearrangement is a $k$-Naples parking function.
\end{thm}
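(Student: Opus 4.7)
The forward direction is immediate: the ascending rearrangement is itself one of the rearrangements, so if all rearrangements are $k$-Naples parking functions, then in particular the ascending one is.

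For the backward direction, let $\alpha=(a_1,\dots,a_n)$ with $a_1\leq\cdots\leq a_n$ be the ascending rearrangement, assumed to be a $k$-Naples parking function. My plan is to induct on the number of adjacent transpositions needed to transform $\alpha$ into the given rearrangement $\pi$ (equivalently, on the number of inversions of $\pi$), reducing to the following key lemma: if $\beta$ is a $k$-Naples parking function with $\beta_i\leq\beta_{i+1}$ for some $i$, then the adjacent swap $\beta'=(\beta_1,\dots,\beta_{i-1},\beta_{i+1},\beta_i,\beta_{i+2},\dots,\beta_n)$ is also a $k$-Naples parking function. Since any rearrangement $\pi$ can be obtained from $\alpha$ by a sequence of such swaps (each increasing the inversion count by one), the theorem will follow.

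To prove the key lemma, let $(d_1,\dots,d_n)$ denote the parking spots of $\beta$. The first $i-1$ cars park identically in $\beta$ and $\beta'$, so the task reduces to understanding how the two swapped cars park. I would perform a case analysis on the $k$-Naples procedure applied to preferences $\beta_i$ and $\beta_{i+1}$ in the two orderings and show that the set $\{e_i,e_{i+1}\}$ of spots occupied by these two cars in $\beta'$ differs from $\{d_i,d_{i+1}\}$ by at most one element, with the replaced element no smaller than its replacement; concretely, $\{e_i,e_{i+1}\}=(\{d_i,d_{i+1}\}\setminus\{d\})\cup\{e\}$ for some $d\geq e$. With this structural claim in hand, Lemma \ref{lem:t1}---applied to $\beta$ at its $(i+1)$-filled stage with pre-fills $(d_1,\dots,d_{i+1})$ and remaining preferences $(\beta_{i+2},\dots,\beta_n)$, under the single modification $d\mapsto e$---yields that the remaining cars of $\beta'$ also park successfully.

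The hard part will be establishing the structural claim. The subtlety is that when a swapped car's parking spot lies within the backward-search range of the other's preference, the $k$-Naples rule can redirect each car to a different spot than in $\beta$. The assumption $\beta_i\leq\beta_{i+1}$ is essential here: processing the larger preference first in $\beta'$ leaves enough room for the smaller preference to park afterward, and, crucially, forces a common element between $\{d_i,d_{i+1}\}$ and $\{e_i,e_{i+1}\}$ in every case (for instance, when $\beta_{i+1}$ is itself free under the occupied set $\{d_1,\dots,d_{i-1}\}$, or when $d_i$ lies outside $\beta_{i+1}$'s $k$-Naples search range), so that only a single application of Lemma \ref{lem:t1} is needed rather than an iterated version.
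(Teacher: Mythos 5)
Your proposal is correct and follows essentially the same route as the paper: reduce to adjacent transpositions of an ascent, show that after the two swapped cars park the occupied set differs from that of the original parking outcome by at most one spot moved strictly earlier, and invoke Lemma \ref{lem:t1} to finish. The case analysis you defer as ``the hard part'' is precisely the (also rather terse) analysis the paper carries out, distinguishing whether the first swapped car lands in $d_{i+1}$ or in $d_i$, and then whether $d_{i+1}<d_i$ or $d_{i+1}>d_i$.
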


\begin{proof}
Note that if all rearrangements of a parking preference are $k$-Naples then this includes the fact that the ascending rearrangement is $k$-Naples.
To prove that all rearrangements of an ascending $k$-Naples parking function are $k$-Naples it suffices to show that if we have a $k$-Naples parking function $P_1=(a_1, a_2,\ldots,a_n)$, with $a_i<a_{i+1}$ for some $i,$ then the preference $P_2=(b_1,b_2,\ldots,b_n)$ where $b_j=a_j$ for every $j\notin\{i,i+1\},$ $b_i=a_{i+1},$ and $b_{i+1}=a_i,$ is also a $k$-Naples parking function.

Let car $c_j$ park in spot $d_j$ in accordance with parking preference $P_1$.
We see that both parking preferences $P_1$ and $P_2$ result in the first $i-1$ cars park identically, then $p_j=d_j$. 
If $c_i$ in $P_2$ parks in $d_{i+1}$, then $c_i$ in $P_1$ must not pass by $d_{i+1}$ before parking or it would park there.
The two cars take up the same spaces in $P_2$ and the rest of the parking proceeds as in $P_1$. 
So, we may assume $c_i$ in $P_2$ does not park in $d_{i+1}$.
But then it must be parking in a spot not open to $c_{i+1}$ in $P_1$, namely $d_i$. 
Then when $c_{i+1}$ goes to park in $P_2$, it drives past $d_i$ which is now full. 

If $d_{i+1}<d_i$, we see car $c_{i+1}$ in $P_1$ backed up all the way to $d_{i+1}$ and since $b_{i+1}<b_i$, $c_{i+1}$ in $P_2$ backs up to $d_{i+1}$ as well.
Otherwise, we have $d_{i+1}>d_i$ and $c_{i+1}$ in $P_2$ clearly parks at or before $d_{i+1}$. 
So, we know that after the $i+1$st car in $P_2$ parks, all the spots the first $i$ cars park in for $P_1$ are full, and a car is either parked in $d_{i+1}$ or in a spot that would be open in $P_1$ that is before $d_{i+1}$. 
Since this is the situation in the previous lemma, we see $P_2$ is a $k$-Naples parking function as desired. 
\end{proof}

\begin{exmp}
We saw above that $(6,6,5,5,3,1)$ is a $2$-Naples parking function, but its rearrangement $(3,5,1,6,6,5)$ is not. 
Note that in the ascending rearrangement $(1,3,5,5,6,6)$, no car can park in the second spot, so it is not a $2$-Naples parking function. 
However, we know $(1,3,3,5,6,6)$ is an ascending $2$-Naples parking function and an exhaustive search shows that all of the rearrangements are also $2$-Naples parking functions. 
\end{exmp}

\begin{rem} 
On a closer look, this proof actually defines a hierarchy that is followed when deciding when rearrangements of a parking preference are $k$-Naples.
If two rearrangements differ by just one switch, where the switched car that comes after in the first rearrangement has a higher preference than the one coming before, then it is intrinsically harder for the first one to be $k$-Naples than the second one. 
This is because according to the proof, the first being $k$-Naples implies the second also is, but the converse is not true. 
\end{rem}

\section{Dyck Paths}\label{sec:DyckPaths}
One family of Catalan objects that is in bijection with descending parking functions are Dyck paths. 
In \cite{naples}, a generalization of this result is presented, that gives a bijection between descending $k$-Naples parking functions and a generalization of Dyck paths called $k$-Dyck paths.  
In related work by Colmenarejo et al \cite{kNPF-AIMUP}, the authors counted k-Naples parking functions through permutations and they also defined the k-Naples area statistic.
In this section, we  explore when a $k$-Dyck path corresponds to an ascending $k$-Naples Parking function, giving a way of finding all $k$-Naples parking functions with the property that each of its rearrangements remains a $k$-Naples parking function. 
We then embed $k$-Dyck paths into a subset of Dyck paths to help us find other bijections with ascending and descending $k$-Naples parking functions.

\begin{defn}\label{def:DyckPath}
    A \textit{Dyck path} of length $n$ is a lattice path of Up $(1,1)$ and Down $(-1,-1)$ steps from $(0,0)$ to $(n,n)$ that never reaches below the line $y=0$. A \textit{$k$-Dyck path} is a similarly defined path that never reaches below the line $y=-k$ and ends with a Down step. 
    Any such path can be represented by a sequence of $U$'s and $D$'s corresponding to its steps. 
    The \textit{length} of a Dyck path or $k$-Dyck path is defined as the number of Up steps it has.
\end{defn}

The following result from \cite{naples} connects $k$-Dyck paths to descending $k$-Naples parking functions.

\begin{prop}[Theorem 1.3, \cite{naples}]\label{prop:DescendingKDyckBijection}
The set of descending $k$-Naples parking functions of length $n$ are in bijective correspondence to $k$-Dyck paths of length $n$. 
\end{prop}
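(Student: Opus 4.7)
The plan is to build an explicit bijection. Given a descending $k$-Naples parking function $(a_1 \geq a_2 \geq \cdots \geq a_n)$, let $b_i := |\{j : a_j = i\}|$ record the multiplicity with which the value $i \in [n]$ appears. Define
\[
    \phi(a_1, \ldots, a_n) \;=\; U^{b_1}\,D\,U^{b_2}\,D\,\cdots\,U^{b_n}\,D.
\]
This word contains $\sum b_i = n$ up steps and exactly $n$ down steps, and ends in $D$ by construction. What remains is to verify that it never falls below height $-k$, and that the procedure can be reversed.

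The heart of the argument is a clean combinatorial characterization, which I would establish first: a weakly decreasing preference $(a_1 \geq \cdots \geq a_n)$ in $[n]^n$ is a $k$-Naples parking function if and only if $a_i \leq n - i + 1 + k$ for every $i \in [n]$. For necessity, suppose $a_i > n - i + 1 + k$. Since the sequence is decreasing, each $a_j$ with $j \leq i$ also exceeds $n - i + 1 + k$. Every car parks in a spot no smaller than its preference minus $k$, so each of $c_1, \ldots, c_i$ would end up in the interval $[n - i + 2, n]$, which has only $i - 1$ elements, forcing a collision. For sufficiency, I would proceed by induction on $i$: if the first $i - 1$ cars have parked in the set $S_i$ of size $i - 1$, car $c_i$ inspects all of $[\max(a_i - k, 1), n]$, which has at least $n - a_i + k + 1 \geq i$ cells, and since $|S_i| < i$ at least one of those cells is empty, so $c_i$ parks.

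With the characterization in hand, observe that $\sum_{i \leq j} b_i = |\{l : a_l \leq j\}|$, so a direct rewriting turns the inequalities $a_i \leq n - i + 1 + k$ into $\sum_{i \leq j} b_i \geq j - k$ for every $j \in [n]$. The height of $\phi(a)$ immediately after the $j$-th $D$ step is exactly $\sum_{i \leq j} b_i - j$, so this condition is precisely the requirement that $\phi(a)$ never drops below $-k$. Hence $\phi(a)$ is a $k$-Dyck path. For the inverse, given any $k$-Dyck path of length $n$, split it at each $D$ step to recover the unique sequence $(c_1, \ldots, c_n)$ of nonnegative integers with the path equal to $U^{c_1}DU^{c_2}D\cdots U^{c_n}D$, and read off the descending sequence in which $i$ appears $c_i$ times. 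The same characterization certifies that this sequence is $k$-Naples, and by inspection $\phi$ and the inverse are mutually inverse.

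The main obstacle is the sufficiency direction of the characterization, since it is where the subtleties of the backward-then-forward parking rule interact with the order in which cars arrive. The pigeonhole argument above handles it uniformly, but some care is needed for the boundary case $a_i - k \leq 1$, where the car's backward scan stops at spot $1$; the available window is then $[1, n]$, which still strictly exceeds $|S_i| = i - 1$ in size, so the induction still goes through without modification.
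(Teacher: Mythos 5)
Your proposal is correct, and the map you construct is exactly the correspondence the paper records in Remark \ref{rem:ParkingFunctionToDyck} (an Up step in the $i$-th block corresponds to a car whose preference is $1$ plus the number of preceding Down steps). The paper itself gives no proof of this proposition --- it is imported as Theorem 1.3 of \cite{naples} --- so your self-contained argument via the characterization ``a weakly decreasing preference is $k$-Naples iff $a_i \leq n-i+1+k$ for all $i$'' (necessity by the pigeonhole on the $i-1$ spots in $[n-i+2,n]$, sufficiency by counting the window $[\max(a_i-k,1),n]$ against the $i-1$ occupied spots) is a valid and complete substitute for the citation.
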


\begin{rem}\label{rem:ParkingFunctionToDyck} From \cite{naples} we have the following correspondence between $k$-Dyck paths and increasing parking preferences.
A $k$-Dyck path $P$ of length $n$ uniquely corresponds to the parking preference $\alpha=(a_1,\dots, a_n)$, where $a_i$ is $1$ plus the number of Down steps coming before the $i$th Up step.
Note that, $\alpha$ is an ascending parking preference. In \cite{naples} the descending rearrangement of $\alpha$ was shown to be $k$-Naples, and it is straightforward to reverse this process to go from decreasing $k$-Naples parking functions of length $n$ to $k$-Dyck paths of length $n$.
\end{rem}

Next, we use this correspondence between ascending parking preferences of length $n$ and $k$-Dyck paths of length $n$ to classify which $k$-Dyck paths correspond to ascending $k$-Naples parking functions.

\begin{figure}[h]
\centering
 \begin{tikzpicture}[scale=0.75]
\draw[red, thick] (0,0) to (12,0);
\draw[thick] (0,0) to (1,1) to (2,0) to (3,-1) to (4,0) to (5,1) to (6,0) to (7,-1) to (8,0) to (9,-1) to (10,0) to (11,1) to (12,0);

\fill   (0,0) circle (2pt)
        (1,1) circle (2pt)
        (2,0) circle (2pt)
        (3,-1) circle (2pt)
        (4,0) circle (2pt)
        (5,1) circle (2pt)
        (6,0) circle (2pt)
        (7,-1) circle (2pt)
        (8,0) circle (2pt)
        (9,-1) circle (2pt)
        (10,0) circle (2pt)
        (11,1) circle (2pt)
        (12,0) circle (2pt)
        ;
\end{tikzpicture}

\caption{The $k$-Dyck path corresponding to $(1,3,3,5,6,6)$.} \label{fig:PPToKDyck}
\end{figure}
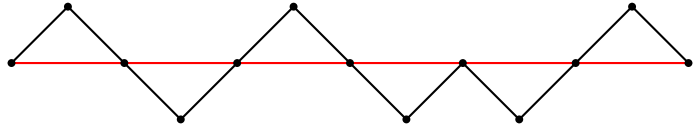

\begin{exmp}
To go from the 2-Dyck path in Figure \ref{fig:PPToKDyck} to an ascending parking preference, we see the first Up step has no previous Down steps making the first preference $1$.
The second and third Up step then correspond to a preference of $3$.
Continuing in this manner yields the parking preference $(1,3,3,5,6,6)$. 
\end{exmp}

\begin{thm}\label{thm:AscendingCharacterization}
    A $k$-Dyck path corresponds to an ascending $k$-Naples parking function if and only if every Down step that puts the path below the line $y=0$ crosses back above $y=0$ within $2k$ steps. 
\end{thm}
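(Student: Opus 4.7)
The plan is to simulate the $k$-Naples parking process on the ascending preference $\alpha=(a_1,\ldots,a_n)$ associated with the $k$-Dyck path $P$ via Remark \ref{rem:ParkingFunctionToDyck}, and show that parking succeeds exactly when the stated path condition holds. First, I would decompose $P$ into its maximal arches, i.e., maximal subpaths starting and ending at height $y=0$ whose interior never touches $y=0$, classifying each as \emph{positive} (interior $y>0$) or \emph{negative} (interior $y<0$). Because the height of $P$ immediately before its $i$-th up step equals $h_i = i - a_i$, preferences inside a positive arch satisfy $a_i \leq i$, while those inside a negative arch satisfy $a_i > i$. I would then group each maximal block of consecutive negative arches into a \emph{negative run}, and let $v$ denote the total number of up steps it contains.

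The central invariant to prove, by induction on the arches, is that immediately before every negative run starts and immediately after every positive arch ends, the occupied spots form the prefix $\{1,2,\ldots,u\}$, where $u$ is the number of up steps processed so far. For a positive arch entered in this prefix state, the bound $a_i \leq i$ forces every backward check to land in the already-filled region, so each car proceeds to the unique open spot of the form $u_0 + j$, filling the next consecutive block. The crucial step is the analysis of a negative run of $v$ up steps entered with $\{1,\ldots,u_0\}$ occupied: by induction on the up steps processed inside the run (with the $k$-Dyck depth bound ensuring that every internal backward movement succeeds), I would show that the occupied set immediately after the run equals $\{1,\ldots,u_0\} \cup \{u_0+2,\ldots,u_0+v+1\}$, leaving a single gap at position $u_0+1$.

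Once the run ends, the subsequent up step lifts $P$ from $y=0$ to $y=1$ and corresponds to a car with preference $u_0+v+1$; it finds that spot occupied and must back through the filled block $u_0+v,u_0+v-1,\ldots,u_0+2$ to reach the gap at $u_0+1$, which succeeds precisely when $v \leq k$. If $v > k$, the ascending property forces every subsequent car to have preference at least $u_0+v+1 > u_0+1+k$, so the gap at $u_0+1$ lies outside every future backward reach and parking fails. Finally, I would translate the condition $v \leq k$ for every negative run into the stated path condition: each down step sending $P$ from $y=0$ to $y=-1$ opens a negative run whose first subsequent moment of height at least $1$ occurs exactly $2v$ steps after that down step, so ``within $2k$ steps'' holds for every such down step if and only if $v \leq k$ for every run.

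I expect the main obstacle to be the bookkeeping inside a negative run, where cars have preferences $a_i > i$ determined by the interleaved down and up steps of possibly several small arches stacked together. One must verify that at each up step in the run the car either parks at its open preferred spot or backs up to the closest open spot, and that this closest open spot always lies strictly above $u_0 + 1$, so that the gap at $u_0 + 1$ is preserved throughout the run. The fact that cars stop at the first open spot encountered during backward movement (rather than the farthest) is what prevents the gap from being filled prematurely by a car that could in principle reach $u_0+1$; once this is formalized, the rest of the proof follows routinely.
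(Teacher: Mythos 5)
Your arch-and-run decomposition is a genuinely different route from the paper's (which proves the forward direction by a contrapositive counting argument at the first offending down step, and the backward direction by induction on $k$), and the skeleton is viable; but the central claim as you state it is false, and the parenthetical you lean on to prove it does not do the job. You assert, for \emph{every} negative run, that the occupied set afterwards is $\{1,\ldots,u_0\}\cup\{u_0+2,\ldots,u_0+v+1\}$ with a single gap at $u_0+1$, citing ``the $k$-Dyck depth bound ensuring that every internal backward movement succeeds.'' The depth bound only gives $a_i\le i+k$; it does not prevent a car from finding all $k$ spots behind its preference occupied, in which case it is pushed \emph{forward}, out of the claimed window. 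Concretely, take $k=3$ and the $3$-Dyck path $DDDUUDDUUUUD$ of length $6$, whose preference sequence is $(4,4,6,6,6,6)$: here $u_0=0$ and the negative run has $v=5$. The first four cars park at $4,3,6,5$; the fifth car prefers $6$, finds $6,5,4,3$ all occupied, and is forced forward, so both spots $1$ and $2$ remain empty and the configuration is not $\{2,\ldots,6\}$ with a lone gap at $1$. Your closing paragraph, which only contemplates ``parks at its preferred spot or backs up to the closest open spot,'' misses exactly this forward-overflow possibility.

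The gap is repairable because your two branches need different amounts of the claim. When $v\le k$ the claim is true and your argument goes through: every preference in the run lies in $[u_0+2,\,u_0+v+1]$ (preferences are nondecreasing and the last one equals $u_0+v+1$), so $a-k\le u_0+1$ and the open spot $u_0+1$ always lies inside the backward window, meaning no car is ever pushed forward; and no car reaches $u_0+1$ because among $u_0+2,\ldots,a$ there are at least $j$ spots but only $j-1$ previously parked cars of the run, so the $j$-th car stops strictly above $u_0+1$. Hence the $v$ cars exactly fill $\{u_0+2,\ldots,u_0+v+1\}$. When $v>k$ you should not claim this structure at all; you only need that spot $u_0+1$ is never occupied, which follows from the same ``there is always an open spot strictly between $u_0+1$ and the preference'' count (so no run car backs into it), together with the observation that every car after the run has preference at least $u_0+v+1>u_0+1+k$ and so cannot reach it either; a permanently empty spot forces some car to fail. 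With that case split made explicit, your induction closes, and your translation of ``$v\le k$ for every negative run'' into the $2k$-step condition on down steps is correct.
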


\begin{proof}
 Assume for the sake of contradiction that we have a $k$-Dyck path $P$ that corresponds to an ascending $k$-Naples parking function, but that at some point $P$ crosses below the line $y=0$ and does not cross back above this line within $2k$ steps. 
 Let step $2i+1$ be the first step where the path goes below $y=0$, but does not cross back above $y=0$ within $2k$ steps. 
 Now, let us look at car $c_{i+j}$ with $1\leq j\leq k+1.$ 
 We see $c_{i+j}$ must have preference larger than $i+j$ since the path is below the horizontal and there are more Down than Up steps during this section. 
 For the car to move back to spot $i+1$, all spots between $i+1$ and the parking preference, including the preference, must be filled.
 We see these are spots $i+2, i+3, \ldots, i+j+1$ of which there are $j$. 
 However, they could only be filled by cars $c_{i+1}, c_{1+2}, \ldots c_{i+j-1}$ of which there are $j-1$. 
 So, one of these spots is open and $c_{i+j}$ cannot fill spot $i+1$. We see cars $c_{i+k+2}$ and later must have parking preference at or larger than or equal to $i+k+2$ since the path does not go below the diagonal until at least step $2(i+k)+2$ by assumption.  So no car fills spot $i+1$ showing that the path does not correspond to a $k$-Naples parking function. 

Next, we show that if a $k$-Dyck path always crosses back above the line $y=0$ within $2k$ steps of it crossing below $y=0$ then it corresponds to an ascending parking function.
We justify this by induction on $k$.
We know this is true for the usual parking functions, i.e. $0$-Naples parking functions, and we assume it is true for all values up to $k-1$.
Suppose we have a $k$-Naples parking function corresponding to a $k$-Dyck path that goes below the line $y=0$ on step $2i+1$.
We may assume that the $k$-Naples parking function leads to the first $i$ spots being filled by the first $i$ cars. 

By hypothesis, the path must go above the horizontal at or before step $2(i+k)+1$.
If the $k$-Dyck path always goes above $y=0$ before step $2(i+k)+1$ after going below $y=0$ on step $2i+1$ then it corresponds to an ascending $(k-1)$-Naples parking function since it cannot touch the line $y=-k$ given that within $2(k-1)$ steps of going under the horizontal it has crossed back above $y=0$.
So, we assume the path goes back above the horizontal for the first time on step $2(i+k)+1$.

Now, we look at cars $c_j$ with $i+1\leq j\leq i+k+1$.
These are all the cars with preferred parking spot $a_j$ corresponding to Up steps below the horizontal except for car $c_{i+k+1}$ whose preferred parking spot corresponds to the last Up step to back above the horizontal $y=0$.
We see that their parking preferences $a_j$ have the property $i+1\leq a_j\leq i+k$.
Since each car is able to move back $k$ spots and $j\leq i+k+1$, we see these cars fill spots at or before spot $i+k+1$.
But that implies the $k+1$ cars fill up the $k+1$ spots immediately after what was already filled. 
So, the first $i+k+1$ cars fill the first $i+k+1$ spots.
Now, if the path goes below the horizontal for the first time on step $2i'+1$, then the first $i'$ spots are clearly filled. 
This shows that a $k$-Dyck path corresponds to an ascending $k$-Naples parking function when each time the path goes below the horizontal, it has had more Up steps than Down steps at least once within $2k$ steps. 
\end{proof}

\begin{cor}\label{cor:Ascending1-Naples}
Every rearrangement of a parking preference is a $k$-Naples parking function if and only if whenever its corresponding $k$-Dyck path has a Down step which crosses the line $y=0,$ the following $2k$ steps have a point where there have been in total two more Up steps than Down steps so far into the path.
\end{cor}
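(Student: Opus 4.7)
The plan is to deduce the corollary by combining Theorem \ref{thm:Rearrangements} with Theorem \ref{thm:AscendingCharacterization}, and then translating the geometric language of the latter into the Up-minus-Down step count appearing in the corollary. First, I would apply Theorem \ref{thm:Rearrangements}, which reduces the problem to asking when the ascending rearrangement of a parking preference is a $k$-Naples parking function. Since the bijection of Remark \ref{rem:ParkingFunctionToDyck} sends the ascending rearrangement to the $k$-Dyck path referenced in the corollary, it suffices to show that the condition on $k$-Dyck paths in Theorem \ref{thm:AscendingCharacterization} is equivalent to the step-counting condition in the corollary.

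For the translation, I would observe that a Down step ``crosses the line $y=0$'' exactly when it drops the path from height $y=0$ to $y=-1$, which is the same event as ``puts the path below the line $y=0$'' in the language of Theorem \ref{thm:AscendingCharacterization}. Immediately after such a step the path sits at height $y=-1$, so ``crosses back above $y=0$'' means reaching height $y\geq 1$. Since every Up step raises the height by $1$ and every Down step lowers it by $1$, the path reaches $y\geq 1$ from $y=-1$ precisely at the moment the running count of Up steps exceeds the running count of Down steps by $2$ within the intervening segment. Hence the condition ``crosses back above $y=0$ within $2k$ steps'' in Theorem \ref{thm:AscendingCharacterization} coincides with the corollary's condition that within the following $2k$ steps there is a point at which two more Up steps than Down steps have been taken.

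The main obstacle is purely notational: one must be careful that ``the following $2k$ steps'' is measured starting just after the crossing Down step, and that the count of ``two more Up steps than Down steps'' is accumulated inside this $2k$-step window rather than from the very start of the whole path (the latter interpretation would require reaching height $y=2$, and would already fail for simple examples such as $k=1$ or the $k$-Dyck path for $(1,3,3,5,6,6)$ drawn in Figure \ref{fig:PPToKDyck}). Once this interpretation is fixed, the biconditional of the corollary follows immediately from the chain of equivalences obtained by stacking Theorem \ref{thm:Rearrangements} and Theorem \ref{thm:AscendingCharacterization} on top of the height-to-count translation above.
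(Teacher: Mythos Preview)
Your proposal is correct and follows the same approach as the paper, which simply states that the result follows directly from Theorems~\ref{thm:AscendingCharacterization} and~\ref{thm:Rearrangements}. Your added care in translating the height condition into the Up-minus-Down count, and in fixing the reading of ``so far into the path'' to mean the count within the $2k$-step window, goes beyond the paper's one-line proof but is entirely consistent with it.
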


\begin{proof}
 This follows directly from Theorems \ref{thm:AscendingCharacterization} and \ref{thm:Rearrangements}.
\end{proof}
\begin{exmp}
From Figure \ref{fig:PPToKDyck}, we see $(1,3,3,5,6,6)$ is not a $1$-Naples parking function even though the lattice path is a $1$-Dyck path. 
In fact, we can see that at step 7 it crosses below $y=0$, and then it takes four steps for the lattice path for the path to cross back above $y=0$. 
Thus, it is a $2$-Naples parking function. 
\end{exmp}

\begin{rem}
Note that, in particular, for the 1-Naples case, the path cannot be below the $y=0$ line for more than 3 steps at a time.
This means that we cannot have two consecutive valleys under the $y=0$ line. 
This special case is equivalent to Conjecture 5 in \cite{naples}.
\end{rem}

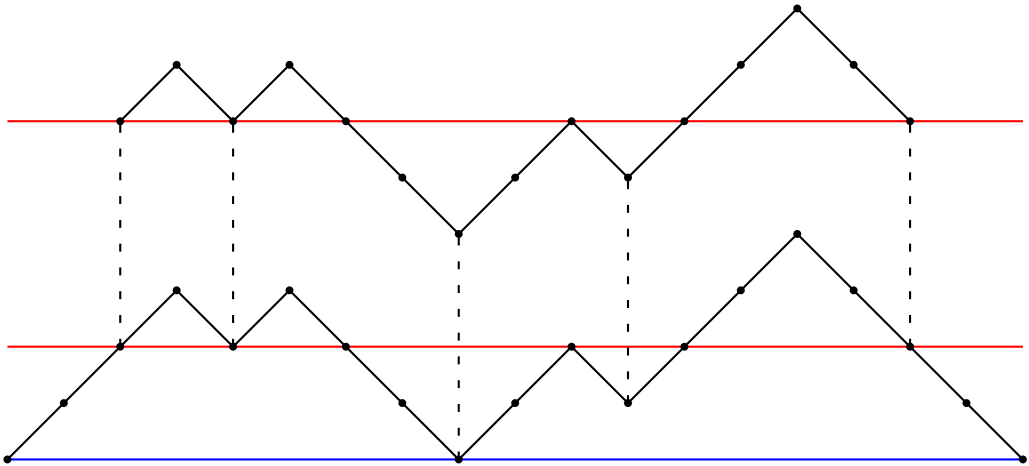
\begin{figure}[h]
\[	
\begin{tikzpicture}[scale=.75]
\newcommand{\U}{--++(1,1)} 
\newcommand{\D}{--++(1,-1)} 

\begin{scope}[shift=($(90:6)+(0:2)$)]

\draw[red, thick] (-2,0) to (16,0);

\draw[thick] (0,0) \U \D \U \D \D\D\U \U\D\U\U\U\D\D;

\fill 	
	(0,0) circle (2pt)
	(1,1) circle (2pt)
	(2,0) circle (2pt)
	(3,1) circle (2pt)
	(4,0) circle (2pt)
	(5,-1) circle (2pt)
	(6,-2) circle (2pt)
	(7,-1) circle (2pt)
	(8,0) circle (2pt)
	(9,-1)circle (2pt)
	(10,0)circle (2pt)
	(11,1) circle (2pt)
	(12,2) circle (2pt)
	(13,1) circle (2pt)
	(14,0)circle (2pt)
	;
\end{scope}

\draw[red, thick] (0,2) to (18,2);

\draw[blue, thick] (0,0) to (18,0);

\begin{scope}[shift=($(90:2)+(0:2)$)]
\draw[thick] (0,0) \U \D \U \D \D\D\U \U\D\U\U\U\D\D;

\fill 	
	(0,0) circle (2pt)
	(1,1) circle (2pt)
	(2,0) circle (2pt)
	(3,1) circle (2pt)
	(4,0) circle (2pt)
	(5,-1) circle (2pt)
	(6,-2) circle (2pt)
	(7,-1) circle (2pt)
	(8,0) circle (2pt)
	(9,-1)circle (2pt)
	(10,0)circle (2pt)
	(11,1) circle (2pt)
	(12,2) circle (2pt)
	(13,1) circle (2pt)
	(14,0)circle (2pt)
	;
\end{scope}

\draw[thick] (0,0) \U\U;

\draw[thick] (16,2) \D\D;
\fill 	
	(0,0) circle (2pt)
	(1,1) circle (2pt)

	(18,0) circle (2pt)
	(17,1) circle (2pt)
	;

\begin{scope}[shift=($(90:2)+(0:2)$), thick, loosely dashed]
\draw (0,0)--++(90:4);
\draw (2,0)--++(90:4);
\draw (6,-2)--++(90:4);
\draw (9,-1)--++(90:4);
\draw (14,0)--++(90:4);

\end{scope}
\end{tikzpicture}
\]
\caption{The $k$-Dyck path to Dyck path transformation.} 
\label{fig:KDyckToDyck}
\end{figure}

Next, we use Proposition \ref{prop:DescendingKDyckBijection} to view descending $k$-Naples parking functions of length $n$ as $k$-Dyck paths of the same length, and then embed these into usual Dyck paths of length $n+k$.
This allows us to obtain many similar bijections for both ascending and descending $k$-Naples parking functions to other subsets of Catalan objects.

\begin{prop}\label{prop:TranslatingAxis}
Descending $k$-Naples parking functions are in bijective correspondence to Dyck paths of length $n+k$ whose first $k$ steps are Up and last $k+1$ steps are Down.
\end{prop}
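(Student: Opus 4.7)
The plan is to factor the claim through Proposition \ref{prop:DescendingKDyckBijection}: it suffices to exhibit a bijection $\Phi$ between $k$-Dyck paths of length $n$ and Dyck paths of length $n+k$ whose first $k$ steps are Up and whose last $k+1$ steps are Down. The map $\Phi$ will simply prepend $k$ Up steps and append $k$ Down steps to a given $k$-Dyck path $P$, and I will verify that this operation yields a valid element of the target set and is invertible. Figure \ref{fig:KDyckToDyck} already suggests exactly this construction, so the content of the proof is the verification.

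First I would check that $\Phi(P)$ is a Dyck path of length $n+k$. Geometrically, the $k$ prepended Up steps raise the body of $P$ by $k$ units, converting the height constraint $y \geq -k$ into $y \geq 0$, so $\Phi(P)$ never goes below the $x$-axis; the numbers of Up and Down steps are each $n+k$, so $\Phi(P)$ ends at height $0$. The boundary conditions are then immediate: the first $k$ steps of $\Phi(P)$ are Up by construction, and the last $k+1$ steps are all Down because the $k$ appended steps are Down and the step immediately preceding them is the final step of $P$, which is Down by Definition \ref{def:DyckPath}.

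For the inverse, I would take a Dyck path $Q$ of length $n+k$ satisfying the prescribed boundary conditions and strip off its first $k$ steps and its last $k$ steps. The remaining interior has $n$ Up and $n$ Down steps; its terminal step is the $(k+1)$-th-to-last step of $Q$, which is Down by hypothesis; and translating it back so that it begins at the origin lowers all heights by $k$, turning $y \geq 0$ into $y \geq -k$. Thus the interior is a $k$-Dyck path of length $n$, and this operation is visibly inverse to $\Phi$. Composing $\Phi$ with the bijection of Proposition \ref{prop:DescendingKDyckBijection} then gives the claimed correspondence with descending $k$-Naples parking functions of length $n$.

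I do not expect a serious obstacle, since the whole argument is essentially a translation in the coordinate plane, but the bookkeeping deserves care. The subtle point is that the image ends in $k+1$ rather than only $k$ consecutive Down steps, and this extra Down step is supplied precisely by the ends-with-a-Down clause in the definition of a $k$-Dyck path; without that clause the target set would have to be enlarged and the bijection would fail at the tail. Dually, one must remember that stripping the first $k$ Up steps of $Q$ is not allowed to destroy any hidden structure, which is why insisting that those $k$ steps be Up (rather than merely that $Q$ start high enough) is essential for well-definedness of the inverse.
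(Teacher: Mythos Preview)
Your proposal is correct and follows essentially the same approach as the paper: both factor through Proposition~\ref{prop:DescendingKDyckBijection} and define the map by prepending $k$ Up steps and appending $k$ Down steps to a $k$-Dyck path, with the inverse given by stripping those steps off. Your write-up is slightly more explicit about why the image ends with $k+1$ (rather than merely $k$) Down steps, but the underlying argument is identical.
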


\begin{proof}
  This is a matter of using the bijection between descending $k$-Naples functions and $k$-Dyck paths and then embedding these $k$-Dyck paths into the usual Dyck paths. 
  Specifically, given a descending $k$-Naples parking function find the corresponding $k$-Dyck path.
  Then, shift this path $k$ units right and $k$ units up so that it starts at $(k,k)$ and concatenate this with the lattice path of all Up steps from $(0,0)$ to $(k,k)$ and the lattice path of all Down steps from $(2n+k,k)$ to $(2n+2k,0)$. In terms of Up steps and Down steps, this corresponds to appending $k$ Up steps to the start of the $k$-Dyck path and $k$ Down steps to the end of the $k$-Dyck path.
  
  Notice, that if a lattice path initially takes $k$ consecutive Up steps, then it is at $(k,k)$ after the first $k$ steps, and $y=0$ is $k$ units below this point.
  By concatenating this with a $k$-Dyck path that is shifted $k$ units right and $k$ units up to start at $(k,k)$, this new path does not venture below $y=0$. 
  Note that the $k$-Dyck path has $n$ Up steps and $n$ Down steps, with the last step always a Down step, so that along $k$ consecutive down steps from $(2n+k,k)$ to $(2n+2k,0)$ this concatenated path is a Dyck path of length $n+k$.
  
  Moreover, the reverse of this process, namely removing the first $k$ Up steps and the last $k$ Down steps of Dyck path whose first $k$ steps are Up and last $k+1$ steps are Down and then shifting the resulting lattice path left $k$ units and down $k$ units so that it starts at $(0,0)$ and ends at $(2n,0)$, results in a $k$-Naples parking function.
\end{proof}

\begin{rem}
An example of this transformation for our running example $(6,6,6,5,5,2,1)$ can be seen in Figure \ref{fig:KDyckToDyck}.
Notice that for a $k$-Dyck path, the corresponding Dyck path represents a descending parking function of length $n+k$ that ends in at least $k$ cars with preference $1$. This leads to the following result.
\end{rem}

\begin{cor}
Descending $k$-Naples parking functions of length $n$ are in bijective correspondence to descending parking functions of length $n+k$ which end with at least $k$ cars with preference $1$. 
\end{cor}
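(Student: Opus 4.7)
The plan is to combine Proposition \ref{prop:TranslatingAxis} with the classical correspondence between Dyck paths of length $N$ and monotone parking functions of length $N$, which is the $k=0$ case of Remark \ref{rem:ParkingFunctionToDyck}. Composing these maps sends a descending $k$-Naples parking function of length $n$ first to a Dyck path of length $n+k$ whose first $k$ steps are Up and whose last $k+1$ steps are Down, then to its associated ascending parking function $(a_1,\ldots,a_{n+k})$, and finally to the descending rearrangement of that parking function.

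The central step is to translate the Dyck-path conditions into preference conditions on the length-$(n+k)$ parking function. The condition ``first $k$ steps are Up'' says that no Down step precedes any of the first $k$ Up steps, so $a_1=a_2=\cdots=a_k=1$; after rearranging in decreasing order these become the last $k$ entries, which is exactly the ``ends in at least $k$ cars of preference $1$'' property.

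An equivalent, more hands-on description of the bijection is the map $(b_1,\ldots,b_n)\mapsto(b_1,\ldots,b_n,\underbrace{1,\ldots,1}_{k})$, with inverse stripping the last $k$ ones. The forward direction is well-defined because the $k$-Dyck path picture of Proposition \ref{prop:DescendingKDyckBijection} gives the bound $b_{n+1-j}\leq j+k$ (the path must stay above $y=-k$), which is precisely the inequality needed to confirm that prepending $k$ leading ones to the ascending rearrangement still satisfies the parking function condition $\tilde a_i\leq i$.

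The step I expect to be most delicate is verifying that the inverse is well-defined: starting from a descending parking function of length $n+k$ that ends in at least $k$ ones, one must show that stripping these trailing ones produces a descending $k$-Naples parking function of length $n$. This is where the ``last $k+1$ steps Down'' condition of Proposition \ref{prop:TranslatingAxis} plays its role; on the parking-function side it forces the maximum preference to be at most $n$, so that the stripped sequence is genuinely a parking preference of length $n$, to which one then applies the reverse direction of Proposition \ref{prop:DescendingKDyckBijection}.
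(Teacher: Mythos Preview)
Your approach is exactly the paper's: the corollary is stated immediately after a remark observing that the Dyck path produced by Proposition~\ref{prop:TranslatingAxis} ``represents a descending parking function of length $n+k$ that ends in at least $k$ cars with preference $1$,'' and no further proof is given. So the route through Proposition~\ref{prop:TranslatingAxis} composed with the $k=0$ case of Remark~\ref{rem:ParkingFunctionToDyck} is precisely what is intended.

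However, you have correctly put your finger on a genuine gap---one that the paper's statement itself shares. Proposition~\ref{prop:TranslatingAxis} characterizes the image as Dyck paths of length $n+k$ satisfying \emph{two} conditions: the first $k$ steps are Up, and the last $k+1$ steps are Down. You translate the first condition correctly into ``at least $k$ trailing ones,'' but the second condition translates into ``maximum preference at most $n$,'' and this is \emph{not} implied by the first. Your last paragraph invokes the ``last $k+1$ Down'' condition to make the inverse well-defined, but that condition is not available if you start from an arbitrary descending parking function of length $n+k$ ending in $k$ ones, as the corollary asserts. Concretely, take $n=2$, $k=1$: there are $C_3=5$ descending parking functions of length $3$ and every one of them ends in a $1$, but there are only $3$ descending $1$-Naples parking functions of length $2$. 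The two extra parking functions are $(3,1,1)$ and $(3,2,1)$, whose Dyck paths $UUDDUD$ and $UDUDUD$ fail the ``last $k+1$ Down'' condition; stripping the trailing $1$ leaves sequences with an entry $3\notin[2]$.

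So your plan proves the correct (and slightly stronger) statement that descending $k$-Naples parking functions of length $n$ biject with descending parking functions of length $n+k$ ending in at least $k$ ones \emph{and with all entries at most $n$}; equivalently, the explicit map $(b_1,\dots,b_n)\mapsto(b_1,\dots,b_n,1,\dots,1)$ you propose is a bijection onto that smaller set. You should make this explicit rather than expecting the ``last $k+1$ Down'' condition to rescue the inverse on the larger set.
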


\begin{rem}
Similar to the transformation in \ref{prop:TranslatingAxis}, we see that Dyck paths of length $n+k$ that do not return to the line $y=0$ until the last step are in bijection to Dyck paths of length $n+k-1$ by removing the Up step and last Down step.
This motivates the next result. 
\end{rem}

\begin{defn}\label{defn:Strictlyk-Naples}
A parking preference is \textit{strictly} $k$-Naples if it is $k$-Naples but not $(k-1)$-Naples.
\end{defn}

\begin{prop}\label{prop:DescendingtoDyck}
The descending $k$-Naples parking functions that are not descending $(k-1)$-Naples parking functions are in bijective correspondence to Dyck paths of length $n+k$ whose first $k$ steps are Up and last $k+1$ steps are Down and which return to the line $y=0$ sometime before the last step.
\end{prop}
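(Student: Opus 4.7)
The plan is to leverage the bijection of Proposition \ref{prop:TranslatingAxis} and show that the additional ``strictly $k$-Naples'' condition on descending $k$-Naples parking functions corresponds precisely to the additional ``returns to $y=0$ before the last step'' condition on the image Dyck paths. In other words, the whole result is obtained by restricting an already established bijection to a matched pair of subsets.

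First I would translate the condition of being strictly $k$-Naples into a geometric condition on the associated $k$-Dyck path. By Proposition \ref{prop:DescendingKDyckBijection} and Remark \ref{rem:ParkingFunctionToDyck}, a descending parking preference of length $n$ is a $k$-Naples parking function if and only if its associated lattice path (built from the ascending rearrangement) is a $k$-Dyck path, i.e., never drops below $y=-k$. Since a $(k-1)$-Dyck path is simply a $k$-Dyck path that never touches the line $y=-k$, it follows that a descending $k$-Naples parking function is strictly $k$-Naples, in the sense of Definition \ref{defn:Strictlyk-Naples}, if and only if its associated $k$-Dyck path touches the line $y=-k$ at least once.

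Next I would track this condition through the shift-and-concatenate transformation used in Proposition \ref{prop:TranslatingAxis}. Shifting the $k$-Dyck path up by $k$ and right by $k$ sends the line $y=-k$ to $y=0$ and places its entire image inside the strip $k \le x \le 2n+k$. The prepended block of $k$ Up steps traverses heights $1,2,\dots,k$ for $0<x\le k$, which are strictly above $y=0$, and the appended block of $k$ Down steps traverses heights $k-1,k-2,\dots,0$ for $2n+k<x\le 2n+2k$, which are strictly above $y=0$ except at the very endpoint $x=2n+2k$. Hence, outside the middle portion the transformed Dyck path meets $y=0$ only at the start $x=0$ and the finish $x=2n+2k$. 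Consequently, the Dyck path returns to $y=0$ at some step before the last one if and only if its middle portion does, if and only if the original $k$-Dyck path touches $y=-k$.

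Combining these two steps, the bijection of Proposition \ref{prop:TranslatingAxis} restricts to a bijection between strictly $k$-Naples descending parking functions of length $n$ and Dyck paths of length $n+k$ whose first $k$ steps are Up, whose last $k+1$ steps are Down, and which return to $y=0$ before the last step. The only delicate step is verifying that the prepended and appended blocks do not create any spurious intermediate visits to $y=0$, which is a direct computation from their heights.
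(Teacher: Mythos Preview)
Your proposal is correct and follows essentially the same approach as the paper's own proof: both restrict the bijection of Proposition~\ref{prop:TranslatingAxis} and argue that a descending $k$-Naples parking function fails to be $(k-1)$-Naples precisely when its $k$-Dyck path reaches the line $y=-k$, which after the shift corresponds to the Dyck path returning to $y=0$ before the final step. Your version is in fact more carefully spelled out, since you explicitly verify that the prepended and appended blocks contribute no intermediate returns to $y=0$, a point the paper leaves implicit.
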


\begin{proof}
 To see this, use the same translation between a $k$-Dyck path and Dyck path as before.
 The Dyck paths that do not return to the horizontal correspond to the $k$-Dyck paths that do not reach the line $y=-k$. 
 If the descending path reaches at most the line $y=-k+1$, it corresponds to a descending $(k-1)$-Naples parking function. 
 This shows the Dyck paths which do return to the $y=0$ before the last step, and thus correspond to $k$-Dyck paths which reach the line $y=-k$, are in bijection with descending $k$-Naples parking functions. 
\end{proof}

Finally, we may also use this embedding of $k$-Dyck paths into Dyck paths to see which Dyck paths are in correspondence to ascending $k$-Naples parking functions. The following corollary follows directly from Theorem \ref{thm:AscendingCharacterization}.

\begin{cor}\label{cor:AscendingtoDyck}
    Ascending $k$-Naples parking functions are in bijective correspondence to Dyck paths with length $n+k$ whose first $k$ steps are Up, last $k+1$ steps are Down, and before the last $k+1$ steps, whenever a Down step puts the path below the line $y=k$, the following $2k$ steps have a point with two more Up then Down steps.
\end{cor}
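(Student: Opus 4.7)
The plan is to compose two bijections already in hand. By Theorem \ref{thm:AscendingCharacterization}, ascending $k$-Naples parking functions of length $n$ are in bijection with the sub-family of $k$-Dyck paths of length $n$ in which every Down step that takes the path below $y=0$ is followed within $2k$ steps by a return above $y=0$. The geometric construction from the proof of Proposition \ref{prop:TranslatingAxis} -- which is purely a coordinate shift plus concatenation, and hence does not depend on whether we are in the ascending or descending setting -- is a bijection between all $k$-Dyck paths of length $n$ and Dyck paths of length $n+k$ whose first $k$ steps are Up and whose last $k+1$ steps are Down. Composing these two bijections yields a bijection between ascending $k$-Naples parking functions of length $n$ and a subset of such Dyck paths, so it only remains to translate the defining condition through the shift.

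Under the upward shift by $k$, the line $y=0$ maps to $y=k$, so ``below $y=0$'' becomes ``below $y=k$'' and ``above $y=0$'' becomes ``above $y=k$''. A return above $y=0$ starting from the foot of a Down step (height $-1$) requires a net gain of $2$ in height; after the shift this says that within the next $2k$ steps there is a point at which the number of Up steps, counted from just after the triggering Down step, exceeds the number of Down steps by $2$, which is exactly the condition in the statement. The qualifier ``before the last $k+1$ steps'' excludes the final Down step of the original $k$-Dyck path together with the $k$ appended Down steps, since these take the image path below $y=k$ for purely structural reasons and should not trigger the condition.

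The only point requiring a moment of care is matching the wording ``two more Up than Down steps'' with the original ``above $y=0$'': once one notices that from height $-1$ (which becomes $k-1$ after the shift) being above the horizontal means a net gain of $2$, the two descriptions are identical. With that noted, no input beyond Theorem \ref{thm:AscendingCharacterization} and the shift from Proposition \ref{prop:TranslatingAxis} is needed, so this is indeed a direct corollary.
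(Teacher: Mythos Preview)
Your proof is correct and follows the same route as the paper, which simply asserts that the corollary follows directly from Theorem~\ref{thm:AscendingCharacterization} (with the embedding of Proposition~\ref{prop:TranslatingAxis} implicit); you have merely spelled out the composition and the translation of the condition under the shift. One small inaccuracy in your explanation of the clause ``before the last $k+1$ steps'': the final Down step of the original $k$-Dyck path lands exactly at height $k$ after the shift, not below it, so it does not itself trigger the condition---the exclusion is really needed only for the $k$ appended Down steps (and the paper's phrasing is simply a harmless overcorrection)---but this does not affect the validity of your argument.
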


\section{Binary Trees}\label{sec:BinaryTrees}

In the previous section, we found a bijection between ascending and descending $k$-Naples parking functions of length $n$ and subsets of Dyck paths of length $n+k$.
Since there is already a well-known bijection between Dyck paths and full binary trees (see \cite{Stanley}, for example) we find a bijection between ascending or descending $k$-Naples parking functions and a certain subset of full binary trees.
By trimming the leaves of the full binary tree, we can then obtain a bijection to a subset of binary trees. 

We begin this section by recalling the following definitions.

\begin{defn}\label{defn:RootedTrees}
A \textit{tree} is an undirected, connected graph with no cycles.
We say that a tree is \textit{rooted} if there is one vertex distinguished as the \textit{root}.
In a rooted tree an \textit{ancestor} of a vertex $v$ is a vertex that lies on the path from $v$ to the root, and we say that an ancestor of a vertex $v$ is the \textit{parent} of $v$ if it is also adjacent to $v$.
Analogously, a \textit{descendant} of a vertex $v$ is any vertex that has $v$ as its ancestor, and a \textit{child} of a vertex $v$ is any vertex that has $v$ as its parent. 
A \textit{leaf} is a vertex with no children. 
A \textit{binary} tree is a rooted tree in which every vertex has at most 2 children. 
If every vertex has either 0 or 2 children we say that the binary tree is \textit{full}. 
\end{defn}

\begin{defn}\label{def:TreesToDyck}
The bijection we use between full binary trees with $2n+1$ nodes and Dyck paths of length $n$ is defined recursively for a given tree, $T$ by $B(T) = UB(T_1)DB(T_2)$. 
Here $U$ and $D$ represent Up and Down steps respectively, $T_1$ is the subtree of $T$ whose root is the left child of $T$ and consists of all descendants of this left child, and $T_2$ is the subtree of the right child. 
If $T'$ consists of a single vertex then $B(T')$ is just the empty word of Up and Downs. 
For further reference see \cite{Stanley}.
\end{defn}

\begin{rem}
From Proposition \ref{prop:TranslatingAxis} and Corollary \ref{cor:AscendingtoDyck} we can see that Dyck paths corresponding to ascending or descending $k$-Naples parking functions must start with $k$ Up steps.
Thus, the corresponding full binary trees must have a path from the root to $k$ consecutive left children.
Describing the remaining structure of the trees coming from ascending or descending $k$-Naples parking functions are one of the focuses of this section.
\end{rem}

\begin{rem}\label{rem:fullToTrimmed}
It is well-known that there is a bijection between full binary trees with $2n+1$ vertices and binary trees with $n$ vertices given by removing the leaves and the edges to which they are incident in the full binary tree. 
To obtain a full binary tree on $2n+1$ vertices from a binary tree on $n$ vertices we add leaves to each node without two children until each of the original vertices has two children. 
Figure \ref{fig:DyckFullBinaryAndBinary} illustrates both of these bijections. 
From left to right we have the Dyck path corresponding to the strictly 2-Naples parking function $(6,6,6,5,5,2,1)$, then the full binary tree on $2n+1$ vertices with the leaves highlighted in red, and lastly the binary tree on $n$ vertices obtained by pruning the red leaves.
\end{rem}

\begin{figure}[h]
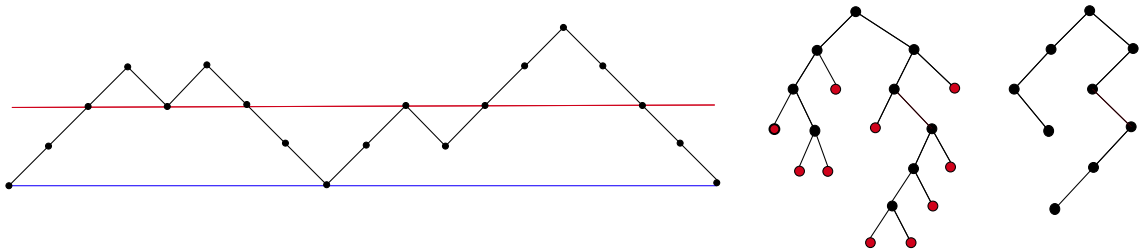

\centering



\caption{Dyck path to full binary tree to binary tree transformations.} \label{fig:DyckFullBinaryAndBinary}
\end{figure}


Before we determine which binary trees with $n+k$ nodes corresponds to ascending $k$-Naples parking functions we need the following definitions. 

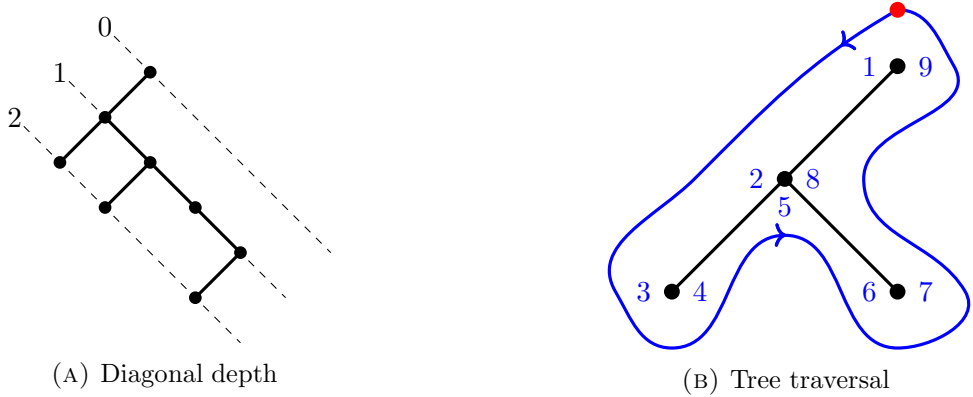
\begin{figure}[h]
\centering
\begin{subfigure}{.5\textwidth}
  \centering
  \begin{tikzpicture}[scale=0.6]
\draw[ dashed] 
	(-.8,.8) to (4,-4)
	(-1.8,-.2) to (3,-5)
	(-2.8,-1.2) to (2,-6)
	;

\draw[very thick] 
	(0,0) to (-2,-2)
	(-1,-1) to (2,-4)
	(0,-2) to (-1,-3)
	(2,-4) to (1,-5)
	;
	
\fill 
	(0,0) circle (4pt)
	(-1,-1) circle (4pt)
	(-2,-2) circle (4pt)
	(0,-2) circle (4pt)
	(-1,-3) circle (4pt)
	(1,-3) circle (4pt)
	(2,-4) circle (4pt)
	(1,-5)circle (4pt)
	;
	
\node at (-1,1) {$0$};
\node at (-2,0) {$1$};
\node at (-3,-1) {$2$};

\end{tikzpicture}
  \caption{Diagonal depth}
  \label{fig:diagdep}
\end{subfigure}%
\begin{subfigure}{.5\textwidth}
 \centering
 \begin{tikzpicture}[scale=1.5]
	\draw[very thick] 
		(0,0) to (-2,-2)
		(-1,-1) to (0,-2)
		;
		
	\draw[very thick, blue,->-=0.05,->-=0.5]
		(0,.5) to[out=-150, in=45] (-1.8,-1) to[out=-135, in=120] (-2.5,-2) to[out=-60, in=180] (-2,-2.5) to[out=0, in=180] (-1,-1.5) to[out=0, in=180] (0,-2.5) to[out=0, in=-60] (0.6, -2) to[out=120, in=-90] (-0.3,-1) to[out=90, in=-60] (0.5,0) to[out=120,in=0] cycle;	
		
	\fill[red] (0,.5) circle (2pt)	;
		
	\node[blue] at (-0.25,0) {$1$};
	\node[blue] at (-1.25,-1) {$2$};
	\node[blue] at (-2.25,-2) {$3$};
	\node[blue] at (-1.75,-2) {$4$};
	\node[blue] at (-1,-1.25) {$5$};
	\node[blue] at (-0.25,-2) {$6$};
	\node[blue] at (0.25,-2) {$7$};
	\node[blue] at (-0.75,-1) {$8$};
	\node[blue] at (0.25,0) {$9$};

	\fill 
		(0,0) circle (2pt)
		(-1,-1) circle (2pt)
		(-2,-2) circle (2pt)
		(0,-2) circle (2pt)
		;
	
\end{tikzpicture}

  \caption{Tree traversal}
  \label{fig:treetrav}
\end{subfigure}
\caption{Depth and traversal in binary trees.}
\end{figure}

\begin{defn}\label{def:DiagonalDepth}
The \textit{diagonal depth} of a node is the number of left steps needed in a path from the root to the node without using any edge more than once.
\end{defn}

\begin{defn}\label{def:TraversingNodes}
The \textit{traversal} of a rooted binary tree $T$ is the unique counterclockwise path in $T$ starting and ending at the root, traversing each edge exactly twice. A vertex $v$ of $T$ is \textit{traversed} at step $i$ if it is $i$-th vertex in the traversal.
\end{defn}

\begin{rem}
A tree traversal is illustrated in blue in Figure \ref{fig:treetrav} with each vertex labeled for each step at which it is traversed.
We see that the root is always traversed for the first time at step $1$ and its left child is traversed for the first time at step $2$.
Observe that a leaf is traversed for the first time at step $i$, then it is traversed for the second time at step $i+1$.
In general, each vertex that does not have two descendants is traversed twice, and each vertex with two descendants is traversed three times.
\end{rem}

To see when a rooted binary tree corresponds to an ascending $k$-Naples parking function of length, we need to understand what height the associated Dyck path is at while each node is being traversed. 

\begin{lem}\label{lem:TraversingHeights}
Suppose $T'$ is a rooted binary tree on $n$ vertices corresponding to the full rooted binary tree $T$ on $2n+1$ vertices, and suppose $B(T)$ is the corresponding Dyck path of length $n$. 
Let $v$ be a vertex of $T'$. If $v$ is traversed at steps $i<j$ in the traversal of $T'$, then the height of $B(T)$ immediately before step $i$ is the same as immediately after step $j$. 
\end{lem}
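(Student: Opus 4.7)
The plan is to prove the lemma by induction on $n=|V(T')|$, exploiting the recursive definition $B(T)=U\,B(T_1)\,D\,B(T_2)$ from Definition~\ref{def:TreesToDyck}. The base case $n=1$ is immediate: $T$ is a cherry, $B(T)=UD$, the single vertex is traversed twice, and the Dyck-path height equals $0$ both immediately before step $1$ and immediately after step $2$.

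For the inductive step I would record two parallel decompositions. On the Dyck-path side, positions $1$ through $1+2|T_1|$ of $B(T)$ (the $B(T_1)$ block) have heights equal to $1+h_{B(T_1)}(\cdot)$, while positions $2+2|T_1|$ through $2|T|$ (the $B(T_2)$ block) have heights equal to $h_{B(T_2)}(\cdot)$. On the traversal side, the counterclockwise walk around $T'$ factors as: visit the root, walk around the left subtree of $T'$ if it exists, possibly revisit the root in the middle, walk around the right subtree if it exists, and finally revisit the root. Consequently, the visits of any $v$ lying in the left (resp.\ right) subtree of $T'$'s root correspond in order, up to a fixed index shift, to the visits of $v$ in that subtree's own traversal.

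I would then split into three cases by the location of $v$. If $v$ is the root of $T'$, then its first and last visits occur at Dyck positions $0$ and $2|T|$, both of height $0$, and the claim is immediate. If $v$ lies in the left subtree of $T'$'s root, then the inductive hypothesis applied to that subtree gives the analogous height equality inside $B(T_1)$; the uniform $+1$ shift on the $B(T_1)$ block of $B(T)$ preserves it. The case where $v$ lies in the right subtree is identical but without any shift.

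The step requiring the most care is pinning down exactly which Dyck-path position is meant by ``immediately before step $i$'' and ``immediately after step $j$.'' The conceptual payoff is that each $v\in T'$ corresponds to a unique matched pair $U\cdots D$ in $B(T)$: the opening $U$ is emitted when the traversal first descends into $v$'s left subtree in $T$, the closing $D$ when it first descends into $v$'s right subtree, and every visit of $v$ in $T'$'s traversal lies inside this matched block. Because $U$ and $D$ are matched, the height immediately before the opening $U$ equals the height immediately after the closing $D$; once this correspondence is made precise, the three inductive cases close the argument.
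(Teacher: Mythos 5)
Your proof is correct and rests on the same mechanism as the paper's: each vertex of $T'$ contributes a matched $U\cdots D$ pair inside the decomposition $B(T)=UB(T_1)DB(T_2)$, and the balancedness of the sub-blocks $B(T_{v_L})$ and $B(T_{v_R})$ forces the heights at the successive visits of $v$ to agree. The paper argues this directly and locally at $v$ with no induction on $n$ --- which is precisely the content of your final paragraph --- so the inductive scaffolding you set up is harmless but ultimately unnecessary.
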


\begin{proof}
 Suppose step $i$ of $B(T)$ is an Up step, and let $v$ be the vertex traversed at step $i$ of the traversal of $T'$. By assumption $v$ is not a leaf in $T$. 
 Then the step $i$ Up step of $B(T)$ is directly followed by the path \[B(T_{v_L})DB(T_{v_R}),\] where $v_L$, $v_R$ are the left and right children of $v$ respectively and $T_{v_{L}}$, $T_{v_{L}}$ are the rooted subtrees those children define.
 Note that $B(T_{v_L})$ and $B(T_{v_R})$ both correspond to Dyck paths and thus must have the same number of Up and Down steps.
 Moreover, the Down step directly after $B(T_{v_L})$ is the $k$-th step of the Dyck path corresponding to the next time that vertex $v$ is traversed. 
 Consequently, the height of $B(T)$ before step $i$ and after step $k$ are identical. 
 
 Recall, that directly following the $k$-th down step of $B(T)$ is $B(T_{v_R})$. If $T_{v_R}$ is not a leaf then we have \[B(T_{v_R})=UB(T_{v_{RL}})DB(T_{v_{RR}}),\] where $T_{v_{RL}}$ ($T_{v_{RL}})$ is the rooted subtree of the left (right) child of $v_R$. 
 By the same reasoning as above the down step following $UB(T_{v_{RL}})$ in the above Dyck path occurs at some step $l$ that occurs at the same height as step $k$ and corresponds to $v$ being traversed at step $l$. 
 \end{proof}

\begin{rem}
Unless otherwise specified we consider the bijective correspondence between rooted binary trees and Dyck paths--where the number of nodes of the tree is the same as the length of the Dyck path--for the remained of this paper.
\end{rem}

\begin{cor}\label{cor:TraversingHeightsChild}
If a node of a rooted binary tree is traversed for the first time at step $i$ and its right child is traversed for the first time at step $j$, then the height of the corresponding Dyck path before step $i$ is the same as before step $j$. 
\end{cor}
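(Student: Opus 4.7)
The plan is to deduce the corollary directly from Lemma \ref{lem:TraversingHeights} by locating the first visit of $v_R$ precisely in the Dyck path in terms of the two Dyck steps associated with $v$.

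First I would invoke Definition \ref{def:TreesToDyck} to write $B(T_v) = U\, B(T_{v_L})\, D\, B(T_{v_R})$, where $v_L$ and $v_R$ are the left and right children of $v$ in the full binary tree $T$. The leading $U$ is step $i$, which encodes the first visit of $v$. The middle $D$, say step $m$, is the next step at which $v$ is traversed, exactly as identified in the proof of Lemma \ref{lem:TraversingHeights}. Immediately after step $m$ comes the subword $B(T_{v_R})$, whose first step is the $U$ encoding the first visit of $v_R$; hence $j = m + 1$. The same picture holds even when $v$ has no left child in $T'$: then $v_L$ is a phantom leaf in $T$ and $B(T_{v_L}) = \varepsilon$, giving $m = i + 1$ and $j = i + 2$, so no separate case analysis is needed.

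Applying Lemma \ref{lem:TraversingHeights} to $v$ at the pair of steps $i < m$ gives that the height of $B(T)$ immediately before step $i$ equals the height immediately after step $m$. Since $j = m + 1$, the height immediately after step $m$ is, by definition, the height immediately before step $j$, and chaining these equalities yields the claim. The argument is largely bookkeeping; the only point requiring care is ensuring that the two steps to which the lemma is applied are the $U$ and the middle $D$ of $v$ in $B(T_v)$, rather than some later position where the traversal returns to $v$ after exploring its right subtree.
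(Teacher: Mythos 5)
Your proof is correct and follows essentially the same route as the paper: the paper's argument is exactly that the right child is traversed for the first time at the step immediately following the parent's second traversal, and then Lemma \ref{lem:TraversingHeights} applied to the parent's first and second traversal steps gives the height equality. Your version just spells out the word decomposition $UB(T_{v_L})DB(T_{v_R})$ more explicitly, which the paper leaves implicit.
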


\begin{proof}
Observe that if a node being traversed for the second time at step $k$ has a right child, then that right child is traversed for the first time at the next step. Along with the previous lemma this completes our proof.
\end{proof}

We can now use these results to see what a return to the horizontal line $y=0$ in the Dyck path corresponds to in the binary tree.

\begin{cor}\label{cor:ReturntoDiagonal}
The Dyck path corresponding to a rooted binary tree returns to the horizontal at step $i$ if $i$ is the last step or a direct right descendant of the root is traversed for the first time at step $i+1$. 
\end{cor}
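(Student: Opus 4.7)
The plan is to dispatch the two sufficient conditions in the statement separately. When $i$ is the last step of the Dyck path $B(T)$, every Dyck path by definition ends at height $0$, so step $i$ is trivially a return to the horizontal.

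For the other case, the idea is to iterate Corollary \ref{cor:TraversingHeightsChild} down the chain of right children starting at the root. Suppose $v$ is a direct right descendant of the root, and let $v_0, v_1, \ldots, v_m = v$ be the path in $T'$ from the root $v_0$ to $v$ along right edges, so that each $v_{\ell+1}$ is the right child of $v_\ell$. The root $v_0$ is first traversed at step $1$, and since every Dyck path begins at the origin, the height of $B(T)$ immediately before step $1$ is $0$. Corollary \ref{cor:TraversingHeightsChild} applied to the pair $v_\ell, v_{\ell+1}$ gives that the height of $B(T)$ immediately before the first-traversal step of $v_{\ell+1}$ equals the height immediately before the first-traversal step of $v_\ell$. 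Inducting on $\ell$, the height of $B(T)$ immediately before the first-traversal step of $v_m = v$ is $0$. If $v$ is first traversed at step $i+1$, this says that $B(T)$ sits at height $0$ immediately after step $i$, which is the return to the horizontal claimed.

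The proof is essentially a one-step induction along the right spine, so no substantive obstacle arises; the only care point is the base case, which is the standard observation that a Dyck path starts at height $0$, together with the consistent identification of Dyck-path steps and traversal steps already fixed by Lemma \ref{lem:TraversingHeights} and Corollary \ref{cor:TraversingHeightsChild}.
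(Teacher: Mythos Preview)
Your argument is correct and follows essentially the same route as the paper: an induction along the right spine of the root using the height-preservation property established in Corollary~\ref{cor:TraversingHeightsChild}. Your write-up is in fact more explicit than the paper's terse version, which phrases the base case in terms of the root's second traversal rather than directly citing Corollary~\ref{cor:TraversingHeightsChild}, and you additionally spell out the trivial last-step case.
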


\begin{proof}
This is true for the first descendant of the root since if the root is traversed for the second time at step $i$, the right descendant is traversed for the first time at step $i+1$. 
The rest follows by induction on the right descendants. 
\end{proof}

Using this result, we find that the height of the corresponding location of a Dyck path is based on the diagonal depth of the node. 

\begin{lem}\label{lem:HTraversingHeight}
If a node has diagonal depth of $h$ and is traversed for the first time on step $i$, then the height of the Dyck path before step $i$ is also $h$. 
\end{lem}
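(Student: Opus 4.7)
The plan is to prove the lemma by strong induction on the step $i$ at which $v$ is first traversed, using Corollary \ref{cor:TraversingHeightsChild} together with the recursive structure of the bijection $B$ to handle the inductive step.

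The base case $i = 1$ is immediate: the root of $T'$ is the unique vertex first traversed at step $1$, it has diagonal depth $0$, and the Dyck path $B(T)$ starts at height $0$. For the inductive step, let $v$ be a vertex first traversed at step $i > 1$. Then $v$ has a parent $v'$ in $T'$, first traversed at some step $i' < i$, and by the inductive hypothesis the height of $B(T)$ before step $i'$ equals the diagonal depth of $v'$.

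I split into two cases. If $v$ is the \emph{right} child of $v'$, then $v$ and $v'$ share the same diagonal depth (since a right edge contributes nothing to diagonal depth), and Corollary \ref{cor:TraversingHeightsChild} applied to the pair $(v', v)$ gives that the height of $B(T)$ before step $i$ equals the height before step $i'$; combined with the inductive hypothesis, this case is complete. If instead $v$ is the \emph{left} child of $v'$, then $v$'s diagonal depth is exactly one more than that of $v'$, and in a counterclockwise traversal of $T'$ the first visit to a left child occurs immediately after the first visit to its parent, so $i = i' + 1$. Unfolding the recursion $B(T) = U\,B(T_1)\,D\,B(T_2)$ applied at $v'$, the Up step associated with the first visit of $v'$ is directly followed by the Up step opening $B$ applied to $v$'s subtree (which is precisely $v'$'s left subtree), so the height before step $i$ is exactly one greater than the height before step $i'$, again matching the diagonal depth of $v$ by induction.

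The main subtlety I anticipate is in the left-child case: one must articulate carefully that exactly one Up step of $B(T)$ intervenes between the first visits of $v'$ and $v$, which is ultimately a statement about how the recursive definition of $B$ translates into the counterclockwise traversal. This is the same kind of bookkeeping used in the proof of Lemma \ref{lem:TraversingHeights}, and I will appeal to that structure rather than re-derive it from scratch.
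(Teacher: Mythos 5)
Your proof is correct and uses the same two ingredients as the paper's: Corollary \ref{cor:TraversingHeightsChild} to handle right children (equal diagonal depth, equal height) and the observation that the step at a parent's first visit is an Up step when it has a left child. The only difference is bookkeeping---you induct on the traversal step $i$ with a parent case split, while the paper inducts on the diagonal depth $h$ and walks up a right-child chain to the nearest left-child ancestor---so the arguments are essentially identical.
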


\begin{proof}

We see this is true for $h=0$. 
Now assume it is true for all values before $m$ and that the node $v$ has diagonal depth $m$ and is traversed for the first time at step $i$. 
We see that if $v$ is the right child of a node, then the height of the path before step $i$ is the same as the height before the step the parent of $v$ is traversed for the first time. 
This allows us to look at the most recent ancestor of $v$ that was the left child. 
Since we are assuming $h>0$, this must exist.
Let this ancestor be node $w$. 
By induction, we see that the path before the parent of $w$ is first traversed is at height $h-1$. 
But this next step is to a left descendent, so must be Up. 
This implies immediately before $w$ is first traversed, the path height is $h$. 
It follows the same is true for $v$, thus proving the statement.
\end{proof}

\begin{rem}
Lemma~\ref{lem:HTraversingHeight} has many consequences. 
For one, we know when the final node of a rooted binary tree is traversed for the first time at step $i$, every remaining step of the corresponding length $2n$ binary tree is Down.
Notice that step $i$ is an Up step. So after step $i$, the path must have a height of at least $k+1$, requiring the path to have a height of at least $k$ before step $i$.
This implies the final node traversed must have a diagonal depth of $k$.
\end{rem}

Now, we look at the nodes with diagonal depths $k-1$. 
When these nodes are traversed for a second time, the corresponding path is taking a Down step to the line $y=k-1$. 
Recall that this is the same as going below the line $y=0$ in a $k$-Dyck path.
Whenever this happens, at some point over the next $2k$ steps, there must be $2$ more Up then Down steps. 
For the tree, this corresponds to one of the next $2k-1$ nodes being traversed, either for the first or second time, must have diagonal depth $k$. 
This completes the description of binary trees that correspond to ascending $k$-Naples parking functions. 

\begin{rem}
Notice that all observations in this section apply equally to the trees which correspond to descending $k$-Naples parking functions except that the corresponding Dyck paths have no restriction on the number of steps spent under the line $y=k$.
\end{rem}

We now give another bijection involving a subset of binary trees, this time from descending strictly $k$-Naples parking functions. 
For this, notice that descending strictly $k$-Naples parking functions are in bijection with Dyck paths of length $n+k$ that start with $k$ Up steps, return to the horizontal before they end, and have at least $k+1$ Up steps after the first return to the horizontal. 
We already know that Dyck paths that start with $k$ Up steps, end with $k+1$ Down steps, and return to the horizontal are in bijection to descending strictly $k$-Naples parking functions, so reflecting the Dyck path after the first return gives us the new result. 
This can be seen in Figure \ref{fig:StrictlyKBijections}. 

\begin{figure}[h]
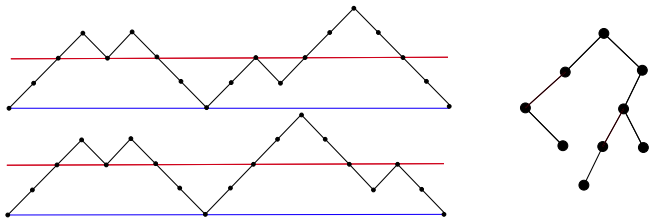

    \centering

\caption{Bijections for descending strictly $k$-Naples parking functions.} 
\label{fig:StrictlyKBijections}
\end{figure}

\begin{prop}\label{prop:SecondBinaryTree}
Descending strictly $k$-Naples parking functions are in bijection with binary trees that have $n+k$ nodes and satisfy the properties that the root has at least $k-1$ left children in a row, has a right child, and this right child has at least $k$ left children in a row.
\end{prop}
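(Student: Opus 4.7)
The plan is to compose the bijection of Proposition \ref{prop:DescendingtoDyck} with the tail-reflection outlined in the paragraph above the statement, and then pass through the recursive Dyck-path-to-binary-tree correspondence of Definition \ref{def:TreesToDyck} together with the leaf-trimming bijection of Remark \ref{rem:fullToTrimmed}.

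Concretely, given a descending strictly $k$-Naples parking function of length $n$, I would first obtain the corresponding Dyck path $P$ of length $n+k$ that starts with $U^k$, ends with $D^{k+1}$, and returns to $y=0$ at some step before the last. I then decompose $P = U^k \cdot M \cdot R$, where $M$ runs from just after the initial $k$ Up steps up to and including the first step at which $P$ returns to the axis, and $R$ is the remainder. The key observation is that $R$ is itself a Dyck excursion (starts and ends at $y=0$, stays non-negative) which ends with $D^{k+1}$. Defining $\tilde{R}$ by reversing $R$ and swapping each $U$ with each $D$ produces another Dyck excursion that begins with $U^{k+1}$, and I set $P' = U^k \cdot M \cdot \tilde{R}$. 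Then $P'$ is a Dyck path of length $n+k$ that starts with $U^k$, has first return to $y=0$ exactly at the end of $M$, and has its next $k+1$ steps all Up. Applying the same reversal to the portion of $P'$ after its first return recovers $R$, so $P \mapsto P'$ is a bijection onto the set of such paths.

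To finish, I would feed $P'$ through $B(T) = UB(T_1)DB(T_2)$ from Definition \ref{def:TreesToDyck} to obtain a full binary tree, then trim leaves via Remark \ref{rem:fullToTrimmed} to get a binary tree $T'$ on $n+k$ nodes. Since the first return to $y=0$ in $B(T)$ is precisely the Down step separating $B(T_1)$ from $B(T_2)$, the three path conditions translate cleanly: starting with $U^k$ forces, by iterated application of the recursive decomposition, the left spine from the root of $T'$ to contain at least $k$ nodes, i.e.\ the root has at least $k-1$ left children in a row; returning to the axis strictly before the end forces $B(T_2) \neq \epsilon$, i.e.\ the root has a right child; and the next $k+1$ steps being Up forces $B(T_2)$ to begin with $U^{k+1}$, i.e.\ the right child of the root has at least $k$ left children in a row. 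Conversely, any binary tree on $n+k$ nodes with these three properties produces, via $B$, a Dyck path meeting all three conditions, and the inverse reflection retrieves a path of the form required by Proposition \ref{prop:DescendingtoDyck}.

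The main obstacle I anticipate is the care needed in the reflection step: checking that the decomposition $P = U^k \cdot M \cdot R$ is uniquely determined, that $\tilde{R}$ remains a valid Dyck excursion, and that the inverse map precisely reinstates the original ``ends with $D^{k+1}$'' constraint. Once the reflection is set up cleanly, the translation of Dyck-path conditions into tree conditions is a direct unfolding of $B$, with an easy induction on the length of the left spine handling the passage between ``initial run of $U$'s'' and ``chain of left children.''
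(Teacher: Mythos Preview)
Your proposal is correct and follows essentially the same approach as the paper: compose Proposition~\ref{prop:DescendingtoDyck} with the tail-reflection of the portion after the first return to the axis, then apply the Dyck-to-tree bijection of Definition~\ref{def:TreesToDyck} and trim leaves. Your write-up is more explicit than the paper's---in particular, your verification that $U^k$ translates to a left spine of $k$ nodes, that a proper first return gives the root a right child, and that the subsequent $U^{k+1}$ gives that right child $k$ consecutive left children---spells out exactly what the paper summarizes in one sentence (``the left subtree of the root remains unchanged \ldots\ the next $k+1$ steps are Up'').
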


\begin{proof}
To see this, notice that the left subtree of the root remains unchanged. 
For the right subtree, this is equivalent to having a return to the horizontal, then using our earlier bijection we see the next $k+1$ steps are Up.
There is then no restriction on the ending. 
So, we see this is indeed in bijection to the Dyck paths just mentioned, and therefore to descending strictly $k$-Naples parking functions.
An example of this binary tree can be seen in Figure \ref{fig:StrictlyKBijections}. 
\end{proof}

\section{Enumeration of Monotonic $k$-Naples Parking Functions}\label{sec:Monotonic}

In the previous two sections we found bijections between either of the two types of monotonic $k$-Naples parking functions--descending or ascending--and subsets of both Dyck paths and binary trees. 
We now use those bijections to help us count these objects.
In particular, we give a recursive formula for the number of ascending $k$-Naples parking functions and a closed formula for their descending counterparts.
We also give results about the generating functions for the sequences corresponding to these objects. 
Throughout this section $C(x)$ is the generating function for the Catalan numbers, and $C_k$ is the $k$th Catalan number.

\begin{defn}\label{defn:Convolution}
Given sequences $(a_n)=a_0,a_1,a_2,\dots$ with generating function $A(x)=a_0+a_1x+a_2x^2+\cdots$ and $(b_n)=b_0,b_1,b_2,\dots$ with generating function $B(x)=b_0+b_1x+b_2x^2+\cdots$, the \textit{convolution} of $(a_n)$ and $(b_n)$ is defined to be $g_n=\sum_{i=0}^na_ib_{n-i},$ with generating function $G(x)=A(x)B(x).$ 
If $B(x)=A(x)$ we write $G(x)=A^2(x)$ and similarly extend to higher exponents. 
\end{defn}



\begin{thm}\label{thm:RecurrenceAscending}
Let $I_{n,k}$ denote the number of ascending $k$-Naples parking functions of length $n$, and let $U_{n,k}$ denote the number of ascending $k$-Naples parking functions of length $n$ which start with~$1$. 
For $n-1\geq k\geq1$ and $n\geq0$, we have 
\begin{align}
    I_{n,k} = I_{n,k-1}+C_k\sum^{n-k}_{i=0} (I_{i,k-1})(U_{n-k-i,k})\text{ and}\\ U_{n,k} = U_{n,k-1}+\sum^{n-k}_{i=0} (U_{i,k-1})(C_k)(U_{n-k-i,k}) \label{eq:5.2}.
\end{align}
\end{thm}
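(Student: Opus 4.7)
The plan is to decompose strictly $k$-Naples ascending parking functions---those that are $k$-Naples but not $(k-1)$-Naples---via the $k$-Dyck path correspondence of Remark~\ref{rem:ParkingFunctionToDyck} together with the characterization of Theorem~\ref{thm:AscendingCharacterization}. The terms $I_{n,k-1}$ (resp.\ $U_{n,k-1}$) in the two recurrences account for ascending $k$-Naples PFs that are also $(k-1)$-Naples, since every $(k-1)$-Naples PF is automatically $k$-Naples; so it remains to enumerate the strictly $k$-Naples ones.

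For such a PF, its $k$-Dyck path satisfies the $2k$-return condition of Theorem~\ref{thm:AscendingCharacterization} but violates the $2(k-1)$-return condition. Let $s$ be the \emph{first} Down step that crosses below $y=0$ whose return distance $r-s$ to the first subsequent edge at $y\geq 1$ exceeds $2(k-1)$. Because the net vertical change from $y=-1$ (after step $s$) to $y=1$ (after step $r$) is $+2$, the quantity $r-s$ must be even; combined with $2(k-1)<r-s\leq 2k$, this forces $r-s=2k$ exactly. I then plan to cut the path at $s$ and at $s+2k$, giving three pieces:
\begin{itemize}
\item A \emph{prefix} of steps $1,\ldots,s-1$ ending at $y=0$, with $i=(s-1)/2$ Ups and $i$ Downs. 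By minimality of $s$ it satisfies the $2(k-1)$-return condition, and hence corresponds to an ascending $(k-1)$-Naples PF of length $i$, contributing $I_{i,k-1}$.
\item A \emph{middle} of steps $s,\ldots,s+2k-1$ with $k$ Ups and $k$ Downs, starting and ending at $y=0$ but staying at $y\leq 0$ throughout (any interior edge reaching $y\geq 1$ would contradict minimality of $r$). These ``down-Dyck'' paths are in bijection with standard Dyck paths of length $k$ via reflection across the $x$-axis, contributing $C_k$.
\item A \emph{suffix} of steps $s+2k,\ldots,2n$ whose first edge (the return) is Up; as a standalone $k$-Dyck path it corresponds to an ascending $k$-Naples PF of length $n-k-i$ whose first car has preference $1$, contributing $U_{n-k-i,k}$.
\end{itemize}

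The main obstacle I expect will be verifying that concatenating any valid prefix, middle, and suffix yields a strictly $k$-Naples ascending PF. Descents within the prefix or suffix satisfy the $2(k-1)$- or $2k$-return condition respectively by hypothesis; the descent opening the middle returns exactly at the suffix's first Up, at distance $2k$; any secondary descent occurring inside the middle returns at that same Up, with distance strictly less than $2k$. Thus the concatenated path satisfies the $2k$-return condition (so is $k$-Naples) while the middle's opening descent witnesses failure of the $2(k-1)$-return condition (so it is strictly $k$-Naples). Summing $C_k\cdot I_{i,k-1}\cdot U_{n-k-i,k}$ over $i$ then gives the first recurrence. For \eqref{eq:5.2}, observe that the PF satisfies $a_1=1$ iff the path's first edge is an Up; because the middle necessarily begins with a Down, the prefix must be nonempty and itself start with $1$, replacing $I_{i,k-1}$ by $U_{i,k-1}$ (the $i=0$ summand then vanishes since $U_{0,k-1}=0$).
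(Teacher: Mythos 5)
Your decomposition into a prefix counted by $I_{i,k-1}$ (resp.\ $U_{i,k-1}$ for the second recurrence), a reflected Dyck path of length $k$ counted by $C_k$, and a suffix beginning with an Up step counted by $U_{n-k-i,k}$ is exactly the decomposition in the paper's proof, which likewise isolates the excursion of length exactly $2k$ below the horizontal that witnesses failure of the $(k-1)$-Naples condition. If anything, your write-up is more careful than the paper's about why that excursion must have length exactly $2k$ (the parity argument), why cutting at the \emph{first} such crossing makes the decomposition well defined, and why the reverse concatenation lands back in the strictly $k$-Naples set.
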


\begin{rem}
Note that $I_{n,0} = C_n$ and $U_{0,k} = 0$, otherwise $U_{n,0} = C_n$.
Further observe that the $k$-Naples parking functions which start with $1$ correspond to $k$-Dyck paths that start with an Up step.
In Theorem \ref{thm:RecurrenceAscending}, if $n\leq k$, then both summations are empty making them $0$.
This corresponds to there being no new $k$-Naples for a fixed length $n$ if $k$ is large enough.
\end{rem}

\begin{figure}[h]
    \centering
\begin{tikzpicture}[x=0.75pt,y=0.75pt,yscale=-1,xscale=1]

\draw [color={rgb, 255:red, 208; green, 2; blue, 27 }  ,draw opacity=1 ][fill={rgb, 255:red, 208; green, 2; blue, 27 }  ,fill opacity=1 ]   (370.56,172.25) -- (130.94,172.75) ;
\draw [color={rgb, 255:red, 139; green, 87; blue, 42 }  ,draw opacity=1 ][fill={rgb, 255:red, 0; green, 0; blue, 0 }  ,fill opacity=1 ]   (370.56,172.25) -- (351.09,152.25) ;
\draw [color={rgb, 255:red, 139; green, 87; blue, 42 }  ,draw opacity=1 ][fill={rgb, 255:red, 0; green, 0; blue, 0 }  ,fill opacity=1 ]   (351.09,152.25) -- (331.12,172.25) ;
\draw [color={rgb, 255:red, 139; green, 87; blue, 42 }  ,draw opacity=1 ][fill={rgb, 255:red, 0; green, 0; blue, 0 }  ,fill opacity=1 ]   (291.19,172.25) -- (311.23,151.75) ;
\draw [color={rgb, 255:red, 139; green, 87; blue, 42 }  ,draw opacity=1 ][fill={rgb, 255:red, 0; green, 0; blue, 0 }  ,fill opacity=1 ]   (311.23,151.75) -- (331.12,172.25) ;
\draw [color={rgb, 255:red, 155; green, 155; blue, 155 }  ,draw opacity=1 ][fill={rgb, 255:red, 0; green, 0; blue, 0 }  ,fill opacity=1 ]   (291.19,172.25) -- (271.22,192.25) ;
\draw [color={rgb, 255:red, 155; green, 155; blue, 155 }  ,draw opacity=1 ][fill={rgb, 255:red, 0; green, 0; blue, 0 }  ,fill opacity=1 ]   (210.97,171.75) -- (230.47,191.25) ;
\draw    (150.91,153.25) -- (170.87,132.25) ;
\draw    (170.87,132.25) -- (190.84,152.25) ;

\draw    (150.91,152.75) -- (130.94,172.75) ;
\draw  [fill={rgb, 255:red, 0; green, 0; blue, 0 }  ,fill opacity=1 ] (189.34,151.75) .. controls (189.34,150.92) and (190.01,150.25) .. (190.84,150.25) .. controls (191.67,150.25) and (192.34,150.92) .. (192.34,151.75) .. controls (192.34,152.58) and (191.67,153.25) .. (190.84,153.25) .. controls (190.01,153.25) and (189.34,152.58) .. (189.34,151.75) -- cycle ;
\draw  [fill={rgb, 255:red, 0; green, 0; blue, 0 }  ,fill opacity=1 ] (169.37,132.25) .. controls (169.37,131.42) and (170.05,130.75) .. (170.87,130.75) .. controls (171.7,130.75) and (172.37,131.42) .. (172.37,132.25) .. controls (172.37,133.08) and (171.7,133.75) .. (170.87,133.75) .. controls (170.05,133.75) and (169.37,133.08) .. (169.37,132.25) -- cycle ;
\draw  [fill={rgb, 255:red, 0; green, 0; blue, 0 }  ,fill opacity=1 ] (129.44,172.75) .. controls (129.44,171.92) and (130.11,171.25) .. (130.94,171.25) .. controls (131.77,171.25) and (132.44,171.92) .. (132.44,172.75) .. controls (132.44,173.58) and (131.77,174.25) .. (130.94,174.25) .. controls (130.11,174.25) and (129.44,173.58) .. (129.44,172.75) -- cycle ;
\draw  [fill={rgb, 255:red, 0; green, 0; blue, 0 }  ,fill opacity=1 ] (149.41,152.75) .. controls (149.41,151.92) and (150.08,151.25) .. (150.91,151.25) .. controls (151.73,151.25) and (152.41,151.92) .. (152.41,152.75) .. controls (152.41,153.58) and (151.73,154.25) .. (150.91,154.25) .. controls (150.08,154.25) and (149.41,153.58) .. (149.41,152.75) -- cycle ;
\draw  [fill={rgb, 255:red, 0; green, 0; blue, 0 }  ,fill opacity=1 ] (369.06,172.25) .. controls (369.06,171.42) and (369.73,170.75) .. (370.56,170.75) .. controls (371.39,170.75) and (372.06,171.42) .. (372.06,172.25) .. controls (372.06,173.08) and (371.39,173.75) .. (370.56,173.75) .. controls (369.73,173.75) and (369.06,173.08) .. (369.06,172.25) -- cycle ;
\draw  [color={rgb, 255:red, 0; green, 0; blue, 0 }  ,draw opacity=1 ][fill={rgb, 255:red, 0; green, 0; blue, 0 }  ,fill opacity=1 ] (349.59,152.25) .. controls (349.59,151.42) and (350.26,150.75) .. (351.09,150.75) .. controls (351.92,150.75) and (352.59,151.42) .. (352.59,152.25) .. controls (352.59,153.08) and (351.92,153.75) .. (351.09,153.75) .. controls (350.26,153.75) and (349.59,153.08) .. (349.59,152.25) -- cycle ;
\draw  [fill={rgb, 255:red, 0; green, 0; blue, 0 }  ,fill opacity=1 ] (329.62,172.25) .. controls (329.62,171.42) and (330.3,170.75) .. (331.12,170.75) .. controls (331.95,170.75) and (332.62,171.42) .. (332.62,172.25) .. controls (332.62,173.08) and (331.95,173.75) .. (331.12,173.75) .. controls (330.3,173.75) and (329.62,173.08) .. (329.62,172.25) -- cycle ;
\draw  [color={rgb, 255:red, 0; green, 0; blue, 0 }  ,draw opacity=1 ][fill={rgb, 255:red, 0; green, 0; blue, 0 }  ,fill opacity=1 ] (309.73,151.75) .. controls (309.73,150.92) and (310.4,150.25) .. (311.23,150.25) .. controls (312.06,150.25) and (312.73,150.92) .. (312.73,151.75) .. controls (312.73,152.58) and (312.06,153.25) .. (311.23,153.25) .. controls (310.4,153.25) and (309.73,152.58) .. (309.73,151.75) -- cycle ;
\draw  [fill={rgb, 255:red, 0; green, 0; blue, 0 }  ,fill opacity=1 ] (289.69,172.25) .. controls (289.69,171.42) and (290.36,170.75) .. (291.19,170.75) .. controls (292.01,170.75) and (292.69,171.42) .. (292.69,172.25) .. controls (292.69,173.08) and (292.01,173.75) .. (291.19,173.75) .. controls (290.36,173.75) and (289.69,173.08) .. (289.69,172.25) -- cycle ;
\draw  [fill={rgb, 255:red, 0; green, 0; blue, 0 }  ,fill opacity=1 ] (209.47,171.75) .. controls (209.47,170.92) and (210.14,170.25) .. (210.97,170.25) .. controls (211.8,170.25) and (212.47,170.92) .. (212.47,171.75) .. controls (212.47,172.58) and (211.8,173.25) .. (210.97,173.25) .. controls (210.14,173.25) and (209.47,172.58) .. (209.47,171.75) -- cycle ;
\draw [color={rgb, 255:red, 155; green, 155; blue, 155 }  ,draw opacity=1 ][fill={rgb, 255:red, 0; green, 0; blue, 0 }  ,fill opacity=1 ]   (230.47,191.29) -- (249.87,170.25) ;
\draw [color={rgb, 255:red, 155; green, 155; blue, 155 }  ,draw opacity=1 ][fill={rgb, 255:red, 0; green, 0; blue, 0 }  ,fill opacity=1 ]   (249.87,170.25) -- (271.22,192.25) ;
\draw  [fill={rgb, 255:red, 0; green, 0; blue, 0 }  ,fill opacity=1 ] (248.5,171.75) .. controls (248.5,172.58) and (249.17,173.25) .. (250,173.25) .. controls (250.83,173.25) and (251.5,172.58) .. (251.5,171.75) .. controls (251.5,170.92) and (250.83,170.25) .. (250,170.25) .. controls (249.17,170.25) and (248.5,170.92) .. (248.5,171.75) -- cycle ;
\draw    (191.47,152.25) -- (210.97,171.75) ;
\draw  [fill={rgb, 255:red, 0; green, 0; blue, 0 }  ,fill opacity=1 ] (269.72,192.25) .. controls (269.72,191.42) and (270.39,190.75) .. (271.22,190.75) .. controls (272.05,190.75) and (272.72,191.42) .. (272.72,192.25) .. controls (272.72,193.08) and (272.05,193.75) .. (271.22,193.75) .. controls (270.39,193.75) and (269.72,193.08) .. (269.72,192.25) -- cycle ;
\draw  [fill={rgb, 255:red, 0; green, 0; blue, 0 }  ,fill opacity=1 ] (228.97,191.29) .. controls (228.97,190.46) and (229.65,189.79) .. (230.47,189.79) .. controls (231.3,189.79) and (231.97,190.46) .. (231.97,191.29) .. controls (231.97,192.12) and (231.3,192.79) .. (230.47,192.79) .. controls (229.65,192.79) and (228.97,192.12) .. (228.97,191.29) -- cycle ;
\draw [color={rgb, 255:red, 0; green, 115; blue, 255 }  ,draw opacity=1 ]   (210.72,159) -- (211.22,187.5) ;
\draw [color={rgb, 255:red, 0; green, 115; blue, 255 }  ,draw opacity=1 ]   (291.22,157.88) -- (291.15,186.63) ;

\end{tikzpicture}

\caption{Breakdown for the recurrence for ascending $k$-Naples parking functions.} 
\label{fig:AscendingRecurrence}
\end{figure}

\begin{proof}

For $I_{n,k}$, we need to find the new ascending $k$-Naples parking functions which are not represented in $I_{n,k-1}$ and add the two together.
Recall that $I_{n,0} = C_n$ when $n>0$, giving the base for our recurrence.
From Theorem \ref{thm:AscendingCharacterization}, we know that an ascending $k$-Naples parking function of length $n$ that is not a $(k-1)$-Naples parking functions must have a corresponding Dyck path that is below the horizontal for exactly $2k$ steps.
Let there be $2i$ steps before the point it goes below the horizontal for $2k$ steps.
We see $i$ of these are Up steps since the last step must be to the horizontal. 
Also, notice that the last step is a Down step as otherwise the path would be below the horizontal for at least $2k+2$ steps.
So, the number of ways to get to this point is the number of $(k-1)$-Naples parking functions of length $i$, recalling that the last step of the $k$-Dyck paths corresponding to ascending Naples parking functions must be down. 
So, we see there are $I_{i,k-1}$ such paths. 
This argument corresponds to the first section in Figure \ref{fig:AscendingRecurrence}. 

Now, once the path has gone below the horizontal, it must stay there for $2k$ steps. 
At this point, it must return to the horizontal. 
Flipping this across the horizontal gives regular Dyck paths of length $k$, leading to $C_k$ possibilities. 
This argument corresponds to the second section in Figure \ref{fig:AscendingRecurrence}. 

Now, the path is at the horizontal after $2(i+k)$ steps.
It must then go up so as to not stay below the horizontal for too long.
But we see the final section of length $n-i-k$ starting with an Up step has $U_{n-i-k, k}$ options, which can be seen in the third section of Figure \ref{fig:AscendingRecurrence}. 

Summing over all $i$ gives $I_{n,k} = I_{n,k-1}+\sum^{n-k}_{i=0} (I_{i,k-1})(C_k)(U_{n-k-i,k})$.
Notice that the first term in the sum represents the paths to the horizontal, the second term the paths under the horizontal, and the third the ending paths which must start with an Up step. 
Also notice the $C_k$ is constant and can be pulled out, giving $I_{n,k-1}+C_k\sum^{n-k}_{i=0} (I_{i,k-1})(U_{n-k-i,k})$.

For $U_{n,k}$, we follow a similar argument. 
Again, recall that $U_{0,0} = 0$ and that otherwise $U_{n,0} = C_n$. 
This gives the base for our recurrence.
We now find the new paths that are $k$-Naples parking functions and not $(k-1)$-Naples parking functions. 
We see that eventually the corresponding $k$-Dyck path must be under the horizontal for $2k$ steps, then must have an Up step to above the horizontal, and finally it finishes the path. 
So, similarly as above, this gives $U_{n,k} = U_{n,k-1}+\sum^{n-k}_{i=0} (U_{i,k-1})(C_k)(U_{n-k-i,k})$ which can be rearranged to give
$$U_{n,k-1}+C_k\sum^{n-k}_{i=0} (U_{i,k-1})(U_{n-k-i,k}).$$ 
Also note that since $U_{0,k} = 0$, the indices can start at $i=1$. 

We see these sums only depend on smaller values of $n$ and $k$. 
Since we know the values of $I_{n,0}$ and $U_{n,0}$, we may find the value of $U_{n,k}$ independently of $I_{n,k}$ and vise versa.
So this gives a recurrence formula for the ascending $k$-Naples parking functions as well as the ascending $k$-Naples parking functions which start at $1$. 
\end{proof}

Given this recursive formula, we can see a connection between the $1$-Naples paths that start with an Up step the Fine Numbers, an integer sequence closely related to the Catalan Numbers. 
Many interpretations of the Fine Number sequence can be found in \cite{ShapiroFineNumber}.

\begin{thm}\label{thm:ClosedStartsUp}
For $n\geq 0$, we have $U_{n,1}=F_{n+1},$ where $F_{n+1}$ denotes the $(n+1)$th Fine Number (\textcolor{blue}{\href{https://oeis.org/A000957}{A000957}}). 
\end{thm}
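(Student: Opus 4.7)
The plan is to show that $u_n := U_{n,1}$ and $F_{n+1}$ satisfy the same one-step linear recurrence with the same initial value, so induction forces the identity.

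First, I would specialize Equation~(\ref{eq:5.2}) of Theorem~\ref{thm:RecurrenceAscending} to $k=1$. Using $C_1 = 1$, the convention $U_{0,0} = 0$, and $U_{i,0} = C_i$ for $i \geq 1$, the $i=0$ term of the sum vanishes and one obtains, for every $n \geq 1$,
\[
u_n \;=\; C_n \;+\; \sum_{i=1}^{n-1} C_i\, u_{n-1-i},
\]
with $u_0 = 0$ (and empty-sum convention when $n=1$, consistent with the direct value $u_1 = 1$).

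Next, I would record a matching identity for the Fine numbers. Among the standard interpretations collected in \cite{ShapiroFineNumber}, $F_n$ counts Dyck paths of semilength $n$ with no ``hill'' (a $UD$ that starts on the $x$-axis) at ground level. A decomposition by the first return to the $x$-axis --- the path is either empty or factors as $U H D R$ where $H$ is a nonempty Dyck path and $R$ is a no-hill Dyck path --- gives the functional equation $F(x) = 1 + x\bigl(C(x)-1\bigr)F(x)$, hence the coefficient recurrence
\[
F_n \;=\; \sum_{i=1}^{n-1} C_i\, F_{n-1-i} \quad (n \geq 1), \qquad F_0 = 1.
\]
Reindexing $n \mapsto n+1$ and peeling off the top term $C_n F_0 = C_n$ yields
\[
F_{n+1} \;=\; C_n \;+\; \sum_{i=1}^{n-1} C_i\, F_{n-i} \qquad (n \geq 1),
\]
which is precisely the recurrence for $u_n$. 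Together with $F_1 = 0 = u_0$, induction on $n$ gives $u_n = F_{n+1}$ for all $n \geq 0$.

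The only real subtlety is the index bookkeeping: the alignment of the two recurrences depends on the vanishing of the $i=0$ contribution in the specialization of Theorem~\ref{thm:RecurrenceAscending}, which in turn relies on the boundary convention $U_{0,k}=0$. Once this is noted, the rest is mechanical, and an equivalent generating-function computation --- verifying $U_1(x) = (C(x)-1)/\bigl(1 - x(C(x)-1)\bigr) = (F(x)-1)/x$ --- can be substituted for the induction if a more compact presentation is preferred.
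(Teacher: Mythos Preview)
Your argument is correct. Both your proof and the paper's start from the specialization of the recurrence in Theorem~\ref{thm:RecurrenceAscending} at $k=1$, but diverge in presentation from there. The paper converts that recurrence into the functional equation $U_1(x)=U_0(x)+xU_1(x)U_0(x)$, solves for $U_1(x)=\dfrac{C(x)-1}{1+x-xC(x)}$, and then manipulates this expression algebraically---using the known closed form $F(x)=\dfrac{1}{1-x^2C^2(x)}$ and the Catalan identity $xC^2(x)=C(x)-1$---to obtain $U_1(x)=\dfrac{F(x)-1}{x}$. You instead stay at the coefficient level: you derive the Fine-number recurrence $F_n=\sum_{i=1}^{n-1}C_iF_{n-1-i}$ directly from the no-hills first-return decomposition, reindex, and match it term by term with your recurrence for $u_n$, closing with a one-line induction. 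Your route is a bit more elementary and arguably more transparent, since it avoids the generating-function algebra and exposes combinatorially \emph{why} the Fine numbers appear; the paper's computation, on the other hand, packages everything into a single identity $U_1(x)=(F(x)-1)/x$ that feeds cleanly into the next result (Theorem~\ref{thm:ClosedAscending}). Your closing remark that the generating-function verification can be substituted for the induction is exactly the paper's proof.
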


\begin{proof}
Let $U_1(x)$ represent the ordinary generating function for $U_{n,1},$ $U_0(x)$ represent the one for $U_{n,0},$ $C(x)$ the one for the Catalan numbers, and $F(x)$ the one for the Fine numbers. 
It suffices to prove that $U_1(x)=\frac{F(x)-1}{x}$ (excluding $F_1$ and reindexing).
We know that $U_{n,0}=C_n$ for all $n>0,$ and $U_{0,0}=0=C_0-1;$ so we have $U_0(x)=C(x)-1.$ Now, for any given $n>0$ (and $k=1$), from \eqref{eq:5.2} we have
\begin{align*}
    U_{n,1}&=U_{n,0}+C_1\sum_{i=0}^{n-1}U_{i,0}U_{n-i-1,1}\\
    x^nU_{n,1}&=x^nU_{n,0}+x^nC_n\sum_{i=0}^{n-1}U_{i,0}U_{n-i-1,1}\\
    x^nU_{n,1}&=x^nU_{n,0}+x\sum_{i=0}^{n-1}x^iU_{i,0}x^{n-i-1}U_{n-i-1,1}.\\
\end{align*}
Adding these equations for all $n>0$ we get 

\[ \sum_{i=1}^{\infty}x^nU_{n,1}=\sum_{i=1}^{\infty}x^nU_{n,0}+x\sum_{i=1}^{\infty}x^{n-1}\sum_{i=0}^{n-1}U_{i,0}U_{n-i-1,1}.\]

Noticing that $\sum_{i=0}^{n-1}x^iU_{i,0}x^{n-i-1}U_{n-i-1,1}$ is the $(n-1)$th term in the convolution of $U_{n,0}$ and $U_{n,1},$ that is, the coefficient of $x^{n-1}$ in $U_1(x)U_0(x)$, we have 
\begin{align*}
    U_1(x)-U_{0,1}&=U_0(x)-U_{0,0}+xU_1(x)U_0(x)\\
    U_1(x)-0&=U_0(x)-0+xU_1(x)U_0(x)\\
    U_1(x)&=U_0(x)+xU_1(x)U_0(x)\\
    U_1(x)&=C(x)-1+xU_1(x)(C(x)-1)\\
    U_1(x)&=\frac{C(x)-1}{1+x-xC(x)}.
\end{align*}

But we know that $F(x)=\frac{1}{1-x^2C^2(x)},$ and that $xC^2(x)-C(x)+1=0,$ so \begin{align*}
    U_1(x)&=\frac{C(x)-1}{1+x-xC(x)}\\
    &=\frac{x(C(x)-1}{x(1+x-xC(x)}\\
    &=\frac{1-(1+x-xC(x))}{x(1+x-xC(x)}\\
    &=\frac{1}{x(1+x-xC(x)}-\frac{1}{x}\\
    &=\frac{1}{x(1-x(C(x)-1)}-\frac{1}{x}\\
    &=\frac{1}{x(1-x^2C^2(x))}-\frac{1}{x}\\
    &=\frac{F(x)}{x}-\frac{1}{x}=\frac{F(x)-1}{x}.\qedhere
\end{align*}
\end{proof}

\begin{thm}\label{thm:ClosedAscending}
For $n\geq 0,$ $I_{n,1}=CF_n,$ where $CF$ is the convolution of the Catalan numbers with the Fine numbers (\textcolor{blue}{\href{https://oeis.org/A000958}{A000958}}).
\end{thm}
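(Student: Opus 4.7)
The plan is to mimic the generating function argument of Theorem \ref{thm:ClosedStartsUp}, but applied to the recurrence for $I_{n,1}$ instead of $U_{n,1}$. The key observation is that the recurrence from Theorem \ref{thm:RecurrenceAscending} with $k=1$ has exactly the convolutional shape needed to turn into a clean product of generating functions, and the factor $U_1(x)$ has already been identified with $(F(x)-1)/x$.

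First I would specialize the recurrence: with $k=1$, $C_1=1$, and $I_{i,0}=C_i$, we get
\[
I_{n,1} \;=\; C_n \;+\; \sum_{i=0}^{n-1} C_i\, U_{n-1-i,1}
\]
for $n\geq 1$, with $I_{0,1}=C_0=1$. Let $I_1(x)=\sum_{n\geq 0} I_{n,1}\,x^n$. Multiplying the recurrence by $x^n$ and summing over $n\geq 1$, and recognizing the inner sum as the coefficient of $x^{n-1}$ in $C(x)U_1(x)$, I would obtain
\[
I_1(x) \;=\; C(x) \;+\; x\,C(x)\,U_1(x).
\]

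Next I would substitute the identity $U_1(x)=\dfrac{F(x)-1}{x}$ proved in Theorem \ref{thm:ClosedStartsUp}, which gives
\[
I_1(x) \;=\; C(x) + C(x)\bigl(F(x)-1\bigr) \;=\; C(x)\,F(x).
\]
By Definition \ref{defn:Convolution}, $C(x)F(x)$ is precisely the generating function of the convolution of the Catalan numbers with the Fine numbers, so reading off coefficients yields $I_{n,1}=CF_n$ as claimed.

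The only place that could require extra care is keeping the indexing of $U_{n,1}$ and $F_n$ consistent with the shift in Theorem \ref{thm:ClosedStartsUp}, since that theorem identifies $U_{n,1}$ with $F_{n+1}$ rather than with $F_n$; this shift is exactly what produces the factor of $x$ in $xC(x)U_1(x)$ and is absorbed cleanly when $U_1(x)$ is rewritten as $(F(x)-1)/x$. Aside from this bookkeeping, the argument is a direct generating function manipulation and there is no substantive obstacle.
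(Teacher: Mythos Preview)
Your proposal is correct and is essentially the paper's own proof: specialize the recurrence of Theorem~\ref{thm:RecurrenceAscending} at $k=1$, pass to generating functions to obtain $I_1(x)=C(x)+xC(x)U_1(x)$, and then substitute $U_1(x)=(F(x)-1)/x$ from Theorem~\ref{thm:ClosedStartsUp} to conclude $I_1(x)=C(x)F(x)$. The only cosmetic difference is that you spell out the $k=1$ recurrence and the coefficient bookkeeping in slightly more detail than the paper does.
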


\begin{proof}
Let $I_1(x)$ be the ordinary generating function for $I_{n,1}$ and $I_0(x)$ be the one for $I_{n,0}$.
Since $I_{0,1}=I_{0,0}$ and $I_0(x)=C(x)$ we get, by a very similar argument as Theorem \ref{thm:ClosedStartsUp},
\begin{align*}
    I_1(x)&=I_0(x)+xI_0(x)U_1(x)\\
    &=C(x)+xC(x)\frac{F(x)-1}{x}\\
    &=C(x)+C(x)F(x)-C(x)=C(x)F(x).
\end{align*} 
\end{proof}

\begin{thm}\label{thm:RecurrenceAscendingFunctional}
Let $U_k(x)$ represent the ordinary generating function for $U_{k,1},$  and define $U_{k-1}(x),$ $I_k(x),$ and $I_{k-1}(x)$ similarly, with $c_k$ being the $k$-th Catalan number.
Then
\begin{align}
    I_k(x)&=I_{k-1}(x)+c_kx^kI_{k-1}(x)U_k(x)\text{ and}\\
    U_k(x)&=U_{k-1}(x)+c_kx^kU_{k-1}(x)U_k(x).
 \end{align}
\end{thm}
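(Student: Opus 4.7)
The plan is to translate the two recurrences of Theorem \ref{thm:RecurrenceAscending} directly into generating function identities via the standard convolution dictionary. First I would verify that each recurrence actually holds for all $n \geq 0$ with the convention that an empty sum equals $0$: for $n \leq k$ the summation indexed by $i=0,\dots,n-k$ is empty, and in that regime one has $I_{n,k}=I_{n,k-1}$ and $U_{n,k}=U_{n,k-1}$ because a lot of size $n$ cannot distinguish between allowing $k$ or $k-1$ backward steps once $k \geq n$. With this convention the recurrences from Theorem~\ref{thm:RecurrenceAscending} hold for every $n \geq 0$.

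Next I would multiply each recurrence by $x^n$ and sum over $n \geq 0$. On the left the sums become $I_k(x)$ and $U_k(x)$, and the first terms on the right become $I_{k-1}(x)$ and $U_{k-1}(x)$. For the remaining term, substitute $m = n-k$ so that
\[
\sum_{n \geq 0} x^n \sum_{i=0}^{n-k} I_{i,k-1}\, U_{n-k-i,k} \;=\; x^k \sum_{m \geq 0} x^m \sum_{i=0}^{m} I_{i,k-1}\, U_{m-i,k}.
\]
The inner sum is, by definition, the $m$-th coefficient of the product $I_{k-1}(x)\,U_k(x)$, so the entire expression equals $x^k I_{k-1}(x) U_k(x)$. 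Pulling out the constant factor $C_k = c_k$ then yields
\[
I_k(x) = I_{k-1}(x) + c_k\, x^k\, I_{k-1}(x)\, U_k(x).
\]
The identical manipulation applied to the $U_{n,k}$ recurrence produces
\[
U_k(x) = U_{k-1}(x) + c_k\, x^k\, U_{k-1}(x)\, U_k(x).
\]

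The argument is essentially a formal rewriting, so there is no deep obstacle; the only care needed is the index shift that produces the $x^k$ factor, and the small-$n$ check ensuring that the recurrence is valid coefficient-by-coefficient (so that we are not missing constant or low-order terms when passing to generating functions). Once both of these bookkeeping points are settled, the two functional equations follow immediately from Theorem~\ref{thm:RecurrenceAscending}.
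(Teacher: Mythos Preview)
Your proposal is correct and follows essentially the same approach as the paper: both arguments verify that the recurrences of Theorem~\ref{thm:RecurrenceAscending} extend to all $n\ge 0$ (using that $I_{n,k}=I_{n,k-1}$ and $U_{n,k}=U_{n,k-1}$ for small $n$), then multiply by $x^n$, sum, and recognise the Cauchy product. One tiny imprecision: for $n=k$ the sum $\sum_{i=0}^{n-k}$ is not literally empty but consists of a single term containing the factor $U_{0,k}=0$, so it still vanishes and your computation goes through unchanged.
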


\begin{proof}
The idea of the proof is identical to Theorem \ref{thm:ClosedAscending}, except for a given $k$ we are only able to use recurrence relations for $n\geq k,$ otherwise the convolution sum becomes meaningless.
This means that we are never adding the coefficients representing degrees smaller than $k$ for both $I_k(x)$ and $I_{k-1}(x)$ in (5.3) and conversely for $U$ in (5.4). 
But this is not an issue, since for $n<k$ any ascending parking preference is $k-1$-Naples (and thus also $k$-Naples), so the coefficients are the same and adding them on both sides of the equation do not change the result, so we can use the exact same reasoning as for Theorem \ref{thm:ClosedStartsUp}.
\end{proof}

For $k\geq 2,$ the generating functions become increasingly cumbersome to work with towards finding a closed formula.

We now use the bijective results between Dyck paths and Binary Trees to find a closed formula for the number of descending strictly $k$-Naples parking functions.
First, we give a closed formula for terms of the convolution of Catalan numbers.

\begin{lem}[Lemma 9, \cite{convolution}]\label{ClosedCOnvolutionFormula}
The $m$th term of the $r$th convolution of the Catalan numbers is given by $\frac{r}{2m-r}\binom{2m-r}{m}$.
\end{lem}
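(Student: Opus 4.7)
The plan is to prove this formula via generating functions combined with Lagrange inversion. Let $C(x) = \sum_{n \geq 0} C_n x^n$ denote the Catalan generating function, so that the $r$-fold convolution of the Catalan sequence has generating function $C(x)^r$. Under the indexing convention of the lemma (the statement is vacuous for $m < r$, and putting $r = 1, m = 1$ recovers $C_0 = 1$), the $m$th term of the $r$th convolution corresponds to the coefficient $[x^m](xC(x))^r = [x^{m-r}]C(x)^r$.

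The key step is to introduce the substitution $w(x) = xC(x)$. Using the Catalan functional equation $C(x) = 1 + xC(x)^2$, one obtains $w = x + w^2$, equivalently $w = x \cdot \tfrac{1}{1-w}$, which places $w$ in the standard Lagrange inversion form $w = x\,\psi(w)$ with $\psi(w) = (1-w)^{-1}$. Applying Lagrange inversion to $G(w) = w^r$ gives
\[
[x^m] w^r \;=\; \frac{r}{m}\bigl[w^{m-r}\bigr]\psi(w)^m \;=\; \frac{r}{m}\bigl[w^{m-r}\bigr](1-w)^{-m} \;=\; \frac{r}{m}\binom{2m-r-1}{m-r}.
\]

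The final step is a routine binomial identity: rewriting in factorial form yields
\[
\frac{r}{m}\binom{2m-r-1}{m-r} \;=\; \frac{r\,(2m-r-1)!}{m!\,(m-r)!} \;=\; \frac{r}{2m-r}\binom{2m-r}{m},
\]
which is the closed form in the statement.

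I expect the main obstacle to be pinning down the indexing convention--specifically, matching ``the $m$th term of the $r$th convolution'' to the correct coefficient of $C(x)^r$ so that the substitution $w = xC(x)$ produces precisely the shift by $r$ needed. An alternative route that sidesteps this ambiguity is a purely combinatorial proof via the cycle lemma: interpret $C(x)^r$ as counting ordered $r$-tuples of Dyck paths with a fixed total length, translate these into lattice paths from $(0,0)$ to $(m,m-r)$ that stay weakly below the diagonal, and apply a ballot-style rotation argument to extract $\frac{r}{2m-r}\binom{2m-r}{m}$ directly. Either route delivers the claim, and since the paper cites this as an external lemma, the generating-function derivation above is the most efficient self-contained presentation.
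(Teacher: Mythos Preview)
Your proof is correct. The Lagrange inversion argument with $w = xC(x)$ satisfying $w = x/(1-w)$ is standard and cleanly executed, and the final binomial simplification is valid.

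As for comparison: the paper does not prove this lemma at all. It is quoted verbatim as an external result from \cite{convolution} and used as a black box in the proof of Theorem~\ref{thm:NewDescendingNaples}. So there is no in-paper argument to compare your approach against.

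On the indexing concern you raised: your reading is the right one. The paper's usage confirms that ``the $m$th term of the $r$th convolution'' means $[x^{m-r}]C(x)^r = [x^m](xC(x))^r$; in the proof of Theorem~\ref{thm:NewDescendingNaples} the relevant sum is $\sum_{i_1+\cdots+i_{2k+2}=n-k-1}C_{i_1}\cdots C_{i_{2k+2}}$, and substituting $m=n+k+1$, $r=2k+2$ into the formula gives the claimed $\frac{k+1}{n}\binom{2n}{n+k+1}$. (The text writes ``$m=n-k-1$'' at one point, which appears to be a slip; the subsequent sentence and the computed answer require $m=n+k+1$.) Your shift by $r$ via $w^r = (xC(x))^r$ handles this correctly.
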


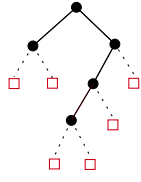
\begin{figure}[h]
    \centering
    
\begin{tikzpicture}[x=0.75pt,y=0.75pt,yscale=-1,xscale=1]

\draw  [fill={rgb, 255:red, 0; green, 0; blue, 0 }  ,fill opacity=1 ] (82.16,32.66) .. controls (82.16,31.29) and (83.27,30.18) .. (84.63,30.18) .. controls (86,30.18) and (87.11,31.29) .. (87.11,32.66) .. controls (87.11,34.03) and (86,35.14) .. (84.63,35.14) .. controls (83.27,35.14) and (82.16,34.03) .. (82.16,32.66) -- cycle ;
\draw [fill={rgb, 255:red, 0; green, 0; blue, 0 }  ,fill opacity=1 ]   (115,51.75) -- (125.98,31.75) ;
\draw  [fill={rgb, 255:red, 0; green, 0; blue, 0 }  ,fill opacity=1 ] (112.52,51.75) .. controls (112.52,50.38) and (113.63,49.27) .. (115,49.27) .. controls (116.37,49.27) and (117.48,50.38) .. (117.48,51.75) .. controls (117.48,53.12) and (116.37,54.23) .. (115,54.23) .. controls (113.63,54.23) and (112.52,53.12) .. (112.52,51.75) -- cycle ;
\draw [fill={rgb, 255:red, 0; green, 0; blue, 0 }  ,fill opacity=1 ]   (106.58,13.15) -- (84.63,32.66) ;
\draw  [fill={rgb, 255:red, 0; green, 0; blue, 0 }  ,fill opacity=1 ] (104.11,13.15) .. controls (104.11,11.78) and (105.22,10.67) .. (106.58,10.67) .. controls (107.95,10.67) and (109.06,11.78) .. (109.06,13.15) .. controls (109.06,14.52) and (107.95,15.63) .. (106.58,15.63) .. controls (105.22,15.63) and (104.11,14.52) .. (104.11,13.15) -- cycle ;
\draw [fill={rgb, 255:red, 0; green, 0; blue, 0 }  ,fill opacity=1 ]   (106.58,13.15) -- (125.98,31.75) ;
\draw  [fill={rgb, 255:red, 0; green, 0; blue, 0 }  ,fill opacity=1 ] (123.5,31.75) .. controls (123.5,30.38) and (124.61,29.27) .. (125.98,29.27) .. controls (127.34,29.27) and (128.45,30.38) .. (128.45,31.75) .. controls (128.45,33.12) and (127.34,34.23) .. (125.98,34.23) .. controls (124.61,34.23) and (123.5,33.12) .. (123.5,31.75) -- cycle ;
\draw  [fill={rgb, 255:red, 0; green, 0; blue, 0 }  ,fill opacity=1 ] (101.52,70.25) .. controls (101.52,68.88) and (102.63,67.77) .. (104,67.77) .. controls (105.37,67.77) and (106.48,68.88) .. (106.48,70.25) .. controls (106.48,71.62) and (105.37,72.73) .. (104,72.73) .. controls (102.63,72.73) and (101.52,71.62) .. (101.52,70.25) -- cycle ;
\draw [fill={rgb, 255:red, 208; green, 2; blue, 27 }  ,fill opacity=1 ]   (104,70.25) -- (115,51.75) ;
\draw  [color={rgb, 255:red, 208; green, 2; blue, 27 }  ,draw opacity=1 ] (92.98,89.75) -- (97.98,89.75) -- (97.98,94.75) -- (92.98,94.75) -- cycle ;
\draw  [color={rgb, 255:red, 208; green, 2; blue, 27 }  ,draw opacity=1 ] (122.5,70) -- (127.5,70) -- (127.5,75) -- (122.5,75) -- cycle ;
\draw  [color={rgb, 255:red, 208; green, 2; blue, 27 }  ,draw opacity=1 ] (72.63,49.16) -- (77.63,49.16) -- (77.63,54.16) -- (72.63,54.16) -- cycle ;
\draw  [color={rgb, 255:red, 208; green, 2; blue, 27 }  ,draw opacity=1 ] (111,90) -- (116,90) -- (116,95) -- (111,95) -- cycle ;
\draw  [color={rgb, 255:red, 208; green, 2; blue, 27 }  ,draw opacity=1 ] (92,49) -- (97,49) -- (97,54) -- (92,54) -- cycle ;
\draw  [color={rgb, 255:red, 208; green, 2; blue, 27 }  ,draw opacity=1 ] (133,49) -- (138,49) -- (138,54) -- (133,54) -- cycle ;
\draw  [dash pattern={on 0.84pt off 2.51pt}]  (104,70.25) -- (95.48,88.25) ;
\draw  [dash pattern={on 0.84pt off 2.51pt}]  (115,51.75) -- (125,68.75) ;
\draw  [dash pattern={on 0.84pt off 2.51pt}]  (135.5,46.75) -- (125.98,31.75) ;
\draw  [dash pattern={on 0.84pt off 2.51pt}]  (84.63,32.66) -- (75,48.25) ;
\draw  [dash pattern={on 0.84pt off 2.51pt}]  (84.63,32.66) -- (94.5,48.75) ;
\draw  [dash pattern={on 0.84pt off 2.51pt}]  (104,70.25) -- (113.5,88.25) ;

\end{tikzpicture}
    \caption{Places to put binary trees on the tree corresponding to descending strictly $k$-Naples parking functions.}
    \label{fig:PlacingBinaryTrees}
\end{figure}

\begin{thm}\label{thm:NewDescendingNaples}
The number of descending strictly $k$-Naples parking functions of length $n$ is $$\frac{k+1}{n}\binom{2n}{n+k+1}.$$
\end{thm}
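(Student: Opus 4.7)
The plan is to count directly the binary trees promised by Proposition~\ref{prop:SecondBinaryTree}. Recall from there that descending strictly $k$-Naples parking functions of length $n$ are in bijection with binary trees on $n+k$ vertices whose root $r_{0}$ has a chain of at least $k-1$ consecutive left descendants, a right child $s_{0}$, and such that $s_{0}$ in turn has a chain of at least $k$ consecutive left descendants. I would begin by identifying the \emph{forced skeleton} consisting of $r_{0}$ together with its mandatory left chain $r_{1},\dots,r_{k-1}$, the right child $s_{0}$, and its mandatory left chain $s_{1},\dots,s_{k}$, for a total of $2k+1$ skeleton vertices. Every admissible tree is obtained uniquely by hanging an arbitrary (possibly empty) binary tree at each child slot of the skeleton not already occupied by another skeleton vertex; attaching a nonempty subtree in, say, the left slot of $r_{k-1}$ simply lengthens the root's left chain beyond $k-1$, which still satisfies the required condition, so there is no double counting.

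Next, I would count those empty slots uniformly. In any binary tree on $N$ vertices there are $2N$ child slots in total, of which exactly $N-1$ are filled (one per non-root vertex), leaving $N+1$ empty slots. Applied to the skeleton with $N=2k+1$, this yields $2k+2$ empty slots; this global count sidesteps the tedium of separately analyzing the boundary case $k=1$, where the root itself has no mandatory left descendants at all. Distributing the remaining $n-k-1$ vertices among these $2k+2$ slots as independent binary subtrees is therefore counted by the coefficient of $x^{n-k-1}$ in $C(x)^{2k+2}$, which in the convention of Lemma~\ref{ClosedCOnvolutionFormula} is the $(n+k+1)$-st term of the $(2k+2)$-nd convolution of the Catalan numbers.

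Finally, I would invoke Lemma~\ref{ClosedCOnvolutionFormula} with $r=2k+2$ and $m=n+k+1$, which gives
\[
    \frac{2k+2}{2(n+k+1)-(2k+2)}\binom{2(n+k+1)-(2k+2)}{n+k+1}
    \;=\; \frac{2k+2}{2n}\binom{2n}{n+k+1}
    \;=\; \frac{k+1}{n}\binom{2n}{n+k+1},
\]
matching the claimed formula. The main obstacle is the slot count: a direct node-by-node enumeration along each chain is doable but forces a separate boundary argument at $k=1$, whereas the uniform identity ``(empty slots) $=N+1$'' handles all $k\geq 1$ at once. Once the slot count is in hand, the closed form follows instantly from the cited lemma.
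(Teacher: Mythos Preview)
Your proposal is correct and follows essentially the same route as the paper: identify the $(2k+1)$-vertex skeleton guaranteed by Proposition~\ref{prop:SecondBinaryTree}, hang independent Catalan-counted subtrees at each of its $2k+2$ open child slots, and then read off the closed form from Lemma~\ref{ClosedCOnvolutionFormula} with $r=2k+2$ and $m=n+k+1$. The one cosmetic difference is that the paper counts the $2k+2$ slots node by node along the two left chains, whereas you invoke the uniform identity ``a binary tree on $N$ vertices has $N+1$ empty child slots''; your version is slightly cleaner and, as you note, handles $k=1$ without a separate boundary check.
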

\begin{proof}

First, notice that if $n\leq k$ then every parking preference of length $n$ is already a $(k-1)$-Naples parking function since each car can move to the beginning if need be, so we cannot have any strictly $k$-Naples parking functions when $n-k-1<0$.

Now, we use the bijection given in Proposition \ref{prop:SecondBinaryTree}. 
We know that the root has $k-1$ direct left descendants, at least $1$ right child, and that right child has at least $k$ direct left descendants.
This uses $1+(k-1)+1+k = 2k+1$ total nodes. 
So, there are $n-k-1$ nodes left.  
Each of the $k-1$ direct left descendants of the root have one place to put another (possibly empty) binary tree and the last node has two.
The right child of the root also has one place, while each of the $k$ direct left descendants of the right child has one. 
Again, the final node will have an extra. 
This gives $k+(k+2) = 2k+2$ total places to place (possibly empty) binary trees. 

Since the number of binary trees with $i$ nodes corresponds to the $i$th Catalan number \cite{Stanley}, we see this gives the number of descending strictly $k$-Naples parking functions as $$\sum_{i_1+i_2+\cdots+i_{2k+2}=n-k-1} C_{i_1}C_{i_2}\cdots C_{i_{2k+2}}.$$
this is the $(2k+2)$ convolution of the Catalan numbers.
Since the indices sum to $n-k-1$, the number of descending strictly $k$-Naples parking functions of length $n$ is the $n+k+1$ term of the $2k+2$nd convolution of the Catalan numbers.
In other words, this gives the corresponding generating function as $x^{k+1}C(x)^{2k+2}$ where we real $C(x)$ is the generating function for the Catalan numbers.
We see that plugging in $m = n-k-1$ and $r = 2k+2$ into the formula from $\ref{ClosedCOnvolutionFormula}$ gives $\frac{k+1}{n}\binom{2n}{n+k+1}$, as we wanted.
\end{proof}

\begin{lem}[Lemma 8, \cite{convolution}]
Let $1\leq q\leq p\leq 2q-1$. Then
$$\sum_{i\geq0}C_i\binom{p-1-2i}{q-1-i} = \binom{p}{q} .$$
\end{lem}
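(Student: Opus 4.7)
The plan is to establish the identity via generating functions, viewing both sides as coefficients of $x^p$ in suitable power series in $x$ (with $q$ fixed). I will show that the generating function of the left-hand side, summed over $p$, agrees with $\sum_{p\ge q}\binom{p}{q}x^p = x^q/(1-x)^{q+1}$ in all degrees $p\le 2q-1$. First I would consider
\[
L_q(x) := \sum_{p\ge 0}\left(\sum_{i\ge 0}C_i\binom{p-1-2i}{q-1-i}\right)x^p,
\]
swap the order of summation, and apply the standard identity $\sum_{m\ge 0}\binom{m}{k}x^m = x^k/(1-x)^{k+1}$, valid for $k\ge 0$ under the usual convention that $\binom{m}{k}=0$ whenever $m<k$ or $k<0$. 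The inner sum $\sum_p\binom{p-1-2i}{q-1-i}x^p$ vanishes for $i\ge q$ and evaluates to $x^{q+i}/(1-x)^{q-i}$ for $0\le i\le q-1$, giving
\[
L_q(x) \;=\; \frac{x^q}{(1-x)^q}\sum_{i=0}^{q-1}C_i\bigl(x(1-x)\bigr)^i.
\]

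Next I would invoke the functional identity $C(x(1-x)) = 1/(1-x)$, which follows from the closed form $C(y)=(1-\sqrt{1-4y})/(2y)$ combined with the factorization $1-4x(1-x)=(1-2x)^2$. Multiplying by $x^q/(1-x)^q$ yields
\[
\frac{x^q}{(1-x)^q}\,C\bigl(x(1-x)\bigr) \;=\; \frac{x^q}{(1-x)^{q+1}} \;=\; \sum_{p\ge q}\binom{p}{q}\,x^p.
\]
It then remains to control the difference between the full series $C(x(1-x))$ and its truncation $\sum_{i=0}^{q-1}C_i(x(1-x))^i$ used to obtain $L_q(x)$.

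That discrepancy equals $\sum_{i\ge q}C_i\bigl(x(1-x)\bigr)^i$. When multiplied by $x^q/(1-x)^q$, a typical term becomes $C_i\,x^{q+i}(1-x)^{i-q}$, whose lowest degree in $x$ is $q+i\ge 2q$. Hence the truncation error affects only coefficients of $x^p$ with $p\ge 2q$, and $L_q(x)$ agrees with $x^q/(1-x)^{q+1}$ in every degree $p\le 2q-1$, which is exactly the hypothesis of the lemma. Reading off the coefficient of $x^p$ on both sides gives the claimed equality.

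The only real subtlety, rather than a genuine obstacle, is recognizing that the upper bound $p\le 2q-1$ in the statement is precisely the threshold imposed by the truncation analysis: extending the range to $p=2q$ would let the $i=q$ term in $C(x(1-x))$ contribute through the factor $x^{2q}$, breaking the identity (as one can verify directly, e.g.\ for $q=1,\,p=2$). Apart from this degree bookkeeping, the argument reduces to two standard ingredients — the generating function for binomial coefficients in the upper argument and the identity $C(x(1-x))=1/(1-x)$ — so no delicate combinatorial insight beyond that is needed.
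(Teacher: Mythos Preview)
Your argument is correct. The generating-function manipulation is sound: the substitution $m=p-1-2i$ in the inner sum, the factorization yielding $\frac{x^q}{(1-x)^q}\sum_{i=0}^{q-1}C_i(x(1-x))^i$, the identity $C(x(1-x))=1/(1-x)$ (which you can also verify directly from the functional equation $C(y)=1+yC(y)^2$), and the degree-counting for the tail $\sum_{i\ge q}C_i x^{q+i}(1-x)^{i-q}$ all check out. Your stated convention $\binom{m}{k}=0$ for $m<k$ does handle the potentially troublesome case of negative upper index with $k=0$, so the coefficient extraction is legitimate.

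As for comparison with the paper: there is nothing to compare. The paper does not prove this lemma at all; it is quoted verbatim as Lemma~8 of the cited reference on Catalan convolutions and then immediately applied in the proof of the subsequent Lemma~\ref{lem:lemma9}. Your proof is therefore a genuine addition rather than a reworking of anything in the present paper. If you wanted to align more closely with the spirit of the surrounding section, you could alternatively derive the identity from the closed form in Lemma~\ref{ClosedCOnvolutionFormula} for the convolutions $C^{r}(x)$, since $\binom{p}{q}-\binom{p-1}{q}=\binom{p-1}{q-1}$ telescopes the right-hand side; but your direct generating-function route is cleaner and entirely self-contained.
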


\begin{lem}\label{lem:lemma9}
Let $G(x) = \frac{C^2(x)}{1-xC^2(x)}$ be the generating functions for $\binom{2n+1}{n}$ (\textcolor{blue}{\href{https://oeis.org/A001700}{A001700}}). 
Then $G(x)(C(x)-1)^{k}$ is the generating functions for $\binom{2n+1}{n+k+1}.$
\end{lem}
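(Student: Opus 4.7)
The plan is to proceed by induction on $k$. The base case $k=0$ asserts that $G(x)$ generates the sequence $\binom{2n+1}{n+1}$, which is exactly the hypothesis that $G(x)$ generates $\binom{2n+1}{n}$, since $\binom{2n+1}{n+1}=\binom{2n+1}{n}$ by the symmetry of binomial coefficients.

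For the inductive step, assume the statement holds for $k$ and compute $[x^n]\bigl(G(x)(C(x)-1)^{k+1}\bigr)$ via the Cauchy product. Since $C(x)-1=\sum_{i\geq1}C_ix^i$ has no constant term, this coefficient equals
\[
\sum_{i\geq 1} C_i\, [x^{n-i}]\bigl(G(x)(C(x)-1)^k\bigr) \;=\; \sum_{i\geq 1} C_i\binom{2n-2i+1}{n-i+k+1},
\]
using the inductive hypothesis on the inner coefficient. The right-hand side is the target sum on which the preceding lemma (Lemma 8 of \cite{convolution}) can be brought to bear.

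To apply Lemma 8, I would set $p=2n+2$ and $q=n+k+2$, so that $p-1-2i=2n-2i+1$ and $q-1-i=n-i+k+1$ match the summand exactly. The hypotheses $1\leq q\leq p\leq 2q-1$ translate into $k\leq n$, which is the only nontrivial range (for $k>n$ both sides of the desired identity vanish identically by binomial support). This yields
\[
\sum_{i\geq 0} C_i\binom{2n-2i+1}{n-i+k+1}=\binom{2n+2}{n+k+2}.
\]
Separating off the $i=0$ term gives $\sum_{i\geq 1}C_i\binom{2n-2i+1}{n-i+k+1}=\binom{2n+2}{n+k+2}-\binom{2n+1}{n+k+1}$, and Pascal's identity collapses this to $\binom{2n+1}{n+k+2}$, completing the induction.

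The only subtle point is verifying that Lemma 8 applies with these particular values of $p,q$ and handling the out-of-range case $k>n$ cleanly; once Lemma 8 is invoked correctly, the rest is essentially a one-line Pascal telescoping. I do not anticipate any serious obstacle here.
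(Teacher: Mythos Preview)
Your proposal is correct and follows essentially the same approach as the paper: induction on $k$, with the inductive step obtained by writing $G(x)(C(x)-1)^{k+1}=G(x)(C(x)-1)^{k}\cdot(C(x)-1)$ as a Cauchy product, applying Lemma~8 of \cite{convolution} with $p=2n+2$ and $q=n+k+2$, subtracting the $i=0$ term, and using Pascal's identity. Your handling of the base case and of the out-of-range situation $k>n$ is in fact slightly cleaner than the paper's exposition.
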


\begin{proof}
For $k=0$, $G(C(x)-1)$ is the generating function for the convolution $\sum_{i=1}^n\binom{2(n-i)+1}{n-i+1}C_i$ where we use the closed formula for $G$, and the lower bound comes from the fact we are dealing with the Catalan numbers without the zeroth term. 
Then 
\begin{align*}
    \sum_{i=1}^n\binom{2(n-i)+1}{n-i+1}C_i&= \sum_{i=0}^n\binom{2(n-i)+1}{n-i+1}C_i - \binom{2n+1}{n+1}\\
    &= \sum_{i=0}^n\binom{(2n+2)-2i-1}{(n+2)-i-1}C_i - \binom{2n+1}{n+1}.
\end{align*}
Now, by Lemma \ref{lem:lemma9} with $p = 2n+2$ and $q=n+2$, we see that $$\binom{2n+2}{n+2} - \binom{2n+1}{n+1} = \binom{2n+2}{n} - \binom{2n+1}{n} = \binom{2n+1}{n-1} = \binom{2n+1}{n+2}$$ by using the recurrence relation for and the symmetry of binomial coefficients. 
This gives the formula for $k=0$. 
Now, assume it is true for all values less than $k$. This gives the convolution for $$G(x)(C(x)-1)^{k+1} = G(x)(C(x)-1)^{k}(C(x)-1)$$ as $\sum_{i=1}^n\binom{2(n-i)+1}{n-i+(k-1)+2}C_i$.
Following the same reasoning as before, we can see this is equivalent to $$\sum_{i=0}^n\binom{(2n+2)-i-1}{(n+k+2)-i-1}C_i - \binom{2n+1}{n+k+1}.$$
Again, by Lemma \ref{lem:lemma9} with $p = 2n+2$ and $q = n+k+2$, we see this gives $\binom{2n+2}{n+k+2} - \binom{2n+1}{n+k+1}$ which then equals $\binom{2n+1}{n+k+2}$, that is, $\binom{2n+1}{n+(k+1)+1}$.
\end{proof}

\begin{cor}\label{cor:AllDescendingNaples}
The total number of descending $k$-Naples parking functions of length $n$ is $\binom{2n-1}{n} - \binom{2n-1}{n+k+1}$.
\end{cor}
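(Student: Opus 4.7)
The plan is to combine Theorem \ref{thm:NewDescendingNaples} with the generating-function identity of Lemma \ref{lem:lemma9}. By Definition \ref{defn:Strictlyk-Naples}, every descending $k$-Naples parking function of length $n$ is strictly $j$-Naples for a unique $j \in \{0, 1, \ldots, k\}$ (interpreting strictly $0$-Naples as an ordinary descending parking function). Denoting by $S_j(n)$ the number of descending strictly $j$-Naples parking functions of length $n$, Theorem \ref{thm:NewDescendingNaples} gives $S_j(n) = \frac{j+1}{n}\binom{2n}{n+j+1}$, and recall from its proof that the ordinary generating function for the sequence $(S_j(n))_n$ is $x^{j+1}C(x)^{2j+2}$. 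The total count is thus $\sum_{j=0}^{k} S_j(n)$, and I aim to identify its generating function with $xG(x) - xG(x)(C(x)-1)^{k+1}$, whose coefficients are computed by Lemma \ref{lem:lemma9}.

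\textbf{Key algebraic step:} I would sum the resulting geometric series using the Catalan functional equation in the form $xC(x)^2 = C(x) - 1$:
$$\sum_{j=0}^{k} x^{j+1}C(x)^{2j+2} \;=\; xC(x)^2 \sum_{j=0}^{k}(C(x)-1)^j \;=\; \frac{xC(x)^2\bigl[1 - (C(x)-1)^{k+1}\bigr]}{2 - C(x)}.$$
Since $1 - xC(x)^2 = 2 - C(x)$, the quotient $\frac{C(x)^2}{2 - C(x)}$ is exactly $G(x)$, so the expression above simplifies to $xG(x) - xG(x)(C(x)-1)^{k+1}$.

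\textbf{Extracting coefficients:} By Lemma \ref{lem:lemma9} with $k=0$, the coefficient of $x^{n-1}$ in $G(x)$ is $\binom{2n-1}{n}$; by Lemma \ref{lem:lemma9} applied to the exponent $k+1$, the coefficient of $x^{n-1}$ in $G(x)(C(x)-1)^{k+1}$ is $\binom{2n-1}{n+k+1}$. Extracting the coefficient of $x^n$ in $xG(x) - xG(x)(C(x)-1)^{k+1}$ therefore yields $\binom{2n-1}{n} - \binom{2n-1}{n+k+1}$, as desired. The main obstacle is merely bookkeeping with the shift by $x$ and the identity $1 - xC(x)^2 = 2 - C(x)$; an even shorter alternative bypasses Lemma \ref{lem:lemma9} by directly verifying the telescoping identity $\frac{j+1}{n}\binom{2n}{n+j+1} = \binom{2n-1}{n+j} - \binom{2n-1}{n+j+1}$ (a one-line factorization of $(2n-1)!/[(n+j+1)!(n-j-1)!]$), but the generating-function route above makes clear the purpose of the preceding two lemmas.
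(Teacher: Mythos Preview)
Your proof is correct and follows essentially the same route as the paper: sum the generating functions $(xC(x)^2)^{j+1}$ for $j=0,\dots,k$ as a geometric series, rewrite via the Catalan identity $xC(x)^2=C(x)-1$ to obtain $xG(x)-xG(x)(C(x)-1)^{k+1}$, and then extract coefficients using Lemma~\ref{lem:lemma9}. The only cosmetic difference is that you substitute $xC(x)^2=C(x)-1$ before summing while the paper substitutes afterward; your parenthetical telescoping remark is a nice bonus but not part of the paper's argument.
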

\begin{proof}
From Theorem \ref{thm:NewDescendingNaples}, we know the generating function for descending strictly $k$-Naples parking functions is $x^{k+1}C^{2k+2}(x) = (xC^2(x))^{k+1}$ where $C(x)$ is the generating function for the Catalan numbers.
Now sum over all values up to $k$. 
Let us call the generating function for this $D(x)$;  we see that
\begin{align*}
    D(x) &= \sum_{i=0}^k(xC^2(x))^{i+1}\\
    &= xC^2(x)\sum_{i=0}^k(xC^2(x))^{i}\\
    &= xC^2(x)(\frac{1-(xC^2(x))^{k+1}}{1-xC^2(x)}).
\end{align*}
Now, let $G(x) = \frac{C^2(x)}{1-xC^2(x)}$, which is the generating function for $\binom{2n+1}{n}$. Then we see that $D(x) = xG(x) - x^{k+2}G(x)C^{2k+2}(x)$.
Since we know a closed formula for $xG(x)$, we need to find one for $x^{k+2}G(x)C^{2k+2}(x) = xG(x)(C(x)-1)^{k+1}$.
We have the closed formula for the sequences for which $G$ and $G(x)(C(x)-1)^{k+1}$ are generating functions.
To find a closed formula for the sequence $D(x) = xG(x) - xG(x)(C(x)-1)^{k+1}$, we simply must combine these and offset by one to account for the extra factor of $x$. This gives $\binom{2n-1}{n} - \binom{2n-1}{n+k+1}$ as desired. 
\end{proof}

\section{Other Bijections}\label{sec:OtherBijections}

Now, that we have a bijection between descending strictly $k$-Naples parking functions and a subset of binary trees, we can use this to find other bijections.
Specifically, we demonstrate a bijection to subsets of $r$-in$-s$ dissections and $r$-rooted non-crossing set partitions. 
These generalize the bijection between descending parking functions of length $n$ with triangulations of $(n+2)$-gons and non-crossing set partitions of $[n]$.

\begin{defn}
An \textit{$r$-in$-s$ dissection is a dissection} of a $s$-gon into a $r$-gon and $(s-r)$ triangles.
\end{defn}

\begin{defn}
An \textit{$r$-rooted non-crossing set partition} is a non-crossing set partition where one of the parts, the root, has size $r$.
\end{defn}

The next theorems are nice consequences of the bijection defined in Proposition \ref{prop:SecondBinaryTree}.
As mentioned in the proof of Theorem \ref{thm:NewDescendingNaples}, choosing a strictly descending $k$-Naples parking function is equivalent to choosing an ordered set of $2k+2$ binary trees.
We know these binary trees with $i$ nodes are in bijection to triangulations of an $(i+2)$-gon and non-crossing partitions of the set $[i]$ \cite{Stanley}.

\begin{thm}\label{thm:t1}
Descending strictly $k$-Naples parking functions of length $n$ are in bijection with $(2k+2)$-in$-(n+k+1)$ dissections, up to rotation, but with a distinguished edge on the $(2k+2)$-gon.
\end{thm}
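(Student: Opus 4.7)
The plan is to lift the counting argument in Theorem \ref{thm:NewDescendingNaples} to an explicit bijection. By Proposition \ref{prop:SecondBinaryTree}, a descending strictly $k$-Naples parking function of length $n$ corresponds to a binary tree on $n+k$ nodes with a rigid "spine": a root carrying $k-1$ consecutive left children, a right child of the root, and $k$ consecutive left children beneath that right child. After removing this spine, what remains is an ordered $(2k+2)$-tuple $(T_1,\dots,T_{2k+2})$ of (possibly empty) binary trees attached at the $2k+2$ available slots identified in the proof of Theorem \ref{thm:NewDescendingNaples}, with $\sum_{j=1}^{2k+2} |T_j|=n-k-1$.

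Next I would invoke the classical bijection $\varphi$ between binary trees on $i$ nodes and triangulations of an $(i+2)$-gon (see \cite{Stanley}), where each edge of the polygon naturally corresponds to an internal edge, and there is a canonical "bottom" edge marking the root. Label the edges of a $(2k+2)$-gon $e_1,\dots,e_{2k+2}$ cyclically with $e_1$ declared distinguished. For each $j$, glue a triangulated $(|T_j|+2)$-gon onto the outside of $e_j$, identifying its canonical bottom edge with $e_j$. The result is a dissection of a polygon into a central $(2k+2)$-gon and $\sum_j |T_j|=n-k-1$ triangles. A quick vertex count confirms the outer boundary has $(2k+2)+\sum_j |T_j|=n+k+1$ vertices, so this is a $(2k+2)$-in-$(n+k+1)$ dissection equipped with a distinguished edge of the inner $(2k+2)$-gon.

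For the inverse map, given such a dissection with distinguished edge $e_1$ on the inner $(2k+2)$-gon, traverse the edges $e_1,\dots,e_{2k+2}$ in cyclic order. Each $e_j$ separates the inner polygon from a (possibly empty) triangulated region; invert $\varphi$ on each region to recover $T_j$, then reattach these trees to the canonical spine of length $2k+1$ to obtain the binary tree. The only subtlety is the phrase "up to rotation": since two rotations of the outer $(n+k+1)$-gon produce distinct labeled dissections but the same unrooted dissection, we must verify that fixing the distinguished edge $e_1$ of the inner $(2k+2)$-gon selects exactly one representative per rotation class. This holds because a rotation of the outer polygon induces a rotation of the inner $(2k+2)$-gon, and conversely any two dissections that agree as unrooted objects but differ only by an outer rotation have different distinguished edges in their inner polygons.

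The main obstacle I anticipate is not the construction itself, which is essentially a bookkeeping exercise, but making the "up to rotation with a distinguished edge" language precise and checking that the group action of the outer dihedral rotations descends cleanly to the inner $(2k+2)$-gon so that the distinguished edge gives a well-defined section. Once that is nailed down, the bijection is the composition of Proposition \ref{prop:SecondBinaryTree}, the decomposition into the spine and $(T_1,\dots,T_{2k+2})$, and $2k+2$ applications of the classical polygon-triangulation bijection $\varphi$.
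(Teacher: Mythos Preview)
Your proposal is correct and follows essentially the same route as the paper: decompose via Proposition~\ref{prop:SecondBinaryTree} into the fixed spine plus an ordered $(2k+2)$-tuple of hanging subtrees, apply the classical binary-tree/triangulation bijection to each, and glue the resulting triangulated polygons onto the edges of a central $(2k+2)$-gon with one edge distinguished. Your treatment of the ``up to rotation'' clause is more careful than the paper's (which simply asserts the construction), though the point is simpler than your discussion suggests: once the distinguished inner edge is fixed, the cyclic sequence of attached triangulations is read off unambiguously, and this data is manifestly invariant under relabeling the outer vertices---so there is no need to analyze how outer rotations act on the inner polygon.
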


\begin{proof}
To go from the dissections to the binary tree, we start in the special edge and see the outer polygon triangulation it is attached to (it could be empty), and the tree corresponding to this triangulation is the subtree that is the left descendent of the left most node on the original tree. 
Going clockwise from there, each next edge in the $(2k+2)$-gon similarly defines a triangulation that corresponds to the next descendent subtree going left to right.

To go from the binary tree to the dissection, we identify what the $2k+2$ subtrees are (some may be empty), and convert them into triangulations as in \cite{Stanley}.

We then combine them, in order, making an $n$-gon out of the edges that correspond to the root of each subtree. The edge corresponding to the right child of the tree is then the special edge. 
This gives us an $(n+k+1)$-gon, since every polygon has $i+2$ edges, where $i$ is the number of nodes in the subtree, but one of them will be internal in the $(2k+2)$-gon, so in total we have $n-k-1+2k+2=n+k+1$ sides. 
We can easily see that none of the dissecting lines exit the polygon, so we can make it convex without creating any issues. 
\end{proof}

The process illustrated in the proof of Theorem \ref{thm:t1} is exemplified in Figure \ref{fig:treeanddissection} for the $2$-Naples parking function $(6,6,6,5,5,2,1)$.
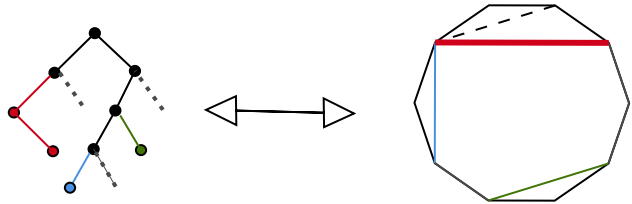
\begin{figure}[h]
    \centering
\tikzset{every picture/.style={line width=0.75pt}} 

\begin{tikzpicture}[x=0.75pt,y=0.75pt,yscale=-1,xscale=1]

\draw   (158.21,83.68) -- (173.31,76.79) -- (173.41,84.05) -- (217.66,85.13) -- (217.56,77.88) -- (232.85,85.5) -- (217.76,92.39) -- (217.66,85.13) -- (173.41,84.05) -- (173.5,91.31) -- cycle ;
\draw   (371.61,80.33) -- (361.19,110.75) -- (334.07,129.5) -- (300.61,129.42) -- (273.59,110.54) -- (263.34,80.07) -- (273.76,49.65) -- (300.88,30.9) -- (334.34,30.98) -- (361.36,49.87) -- cycle ;
\draw [color={rgb, 255:red, 74; green, 74; blue, 74 }  ,draw opacity=1 ]   (361.36,49.86) -- (371.62,80.33) ;
\draw [color={rgb, 255:red, 65; green, 117; blue, 5 }  ,draw opacity=1 ]   (361.19,110.75) -- (300.6,129.42) ;
\draw [color={rgb, 255:red, 74; green, 144; blue, 226 }  ,draw opacity=1 ]   (273.59,110.54) -- (273.76,49.65) ;
\draw  [dash pattern={on 4.5pt off 4.5pt}]  (334.35,30.98) -- (273.76,49.65) ;
\draw [color={rgb, 255:red, 74; green, 74; blue, 74 }  ,draw opacity=1 ]   (371.62,80.33) -- (361.19,110.75) ;
\draw [color={rgb, 255:red, 74; green, 74; blue, 74 }  ,draw opacity=1 ]   (273.59,110.54) -- (300.6,129.42) ;
\draw [color={rgb, 255:red, 208; green, 2; blue, 27 }  ,draw opacity=1 ][line width=2.25]    (273.76,49.65) -- (361.36,49.86) ;

\draw  [fill={rgb, 255:red, 0; green, 0; blue, 0 }  ,fill opacity=1 ] (79.23,64.96) .. controls (79.23,63.55) and (80.39,62.41) .. (81.82,62.41) .. controls (83.24,62.41) and (84.4,63.55) .. (84.4,64.96) .. controls (84.4,66.36) and (83.24,67.5) .. (81.82,67.5) .. controls (80.39,67.5) and (79.23,66.36) .. (79.23,64.96) -- cycle ;
\draw  [fill={rgb, 255:red, 65; green, 117; blue, 5 }  ,fill opacity=1 ] (122.84,103.99) .. controls (122.84,102.59) and (124,101.45) .. (125.43,101.45) .. controls (126.86,101.45) and (128.01,102.59) .. (128.01,103.99) .. controls (128.01,105.39) and (126.86,106.53) .. (125.43,106.53) .. controls (124,106.53) and (122.84,105.39) .. (122.84,103.99) -- cycle ;
\draw  [fill={rgb, 255:red, 0; green, 0; blue, 0 }  ,fill opacity=1 ] (98.95,103.48) .. controls (98.95,102.07) and (100.11,100.94) .. (101.53,100.94) .. controls (102.96,100.94) and (104.12,102.07) .. (104.12,103.48) .. controls (104.12,104.88) and (102.96,106.02) .. (101.53,106.02) .. controls (100.11,106.02) and (98.95,104.88) .. (98.95,103.48) -- cycle ;
\draw [fill={rgb, 255:red, 0; green, 0; blue, 0 }  ,fill opacity=1 ]   (112.49,84.01) -- (122.38,64.03) ;
\draw  [fill={rgb, 255:red, 0; green, 0; blue, 0 }  ,fill opacity=1 ] (109.91,84.01) .. controls (109.91,82.6) and (111.06,81.47) .. (112.49,81.47) .. controls (113.92,81.47) and (115.08,82.6) .. (115.08,84.01) .. controls (115.08,85.41) and (113.92,86.55) .. (112.49,86.55) .. controls (111.06,86.55) and (109.91,85.41) .. (109.91,84.01) -- cycle ;
\draw [fill={rgb, 255:red, 0; green, 0; blue, 0 }  ,fill opacity=1 ]   (102.14,44.96) -- (81.82,64.96) ;
\draw  [fill={rgb, 255:red, 0; green, 0; blue, 0 }  ,fill opacity=1 ] (99.56,44.96) .. controls (99.56,43.56) and (100.72,42.42) .. (102.14,42.42) .. controls (103.57,42.42) and (104.73,43.56) .. (104.73,44.96) .. controls (104.73,46.37) and (103.57,47.5) .. (102.14,47.5) .. controls (100.72,47.5) and (99.56,46.37) .. (99.56,44.96) -- cycle ;
\draw [color={rgb, 255:red, 65; green, 117; blue, 5 }  ,draw opacity=1 ][fill={rgb, 255:red, 65; green, 117; blue, 5 }  ,fill opacity=1 ]   (114.99,86.55) -- (125.43,103.99) ;
\draw [fill={rgb, 255:red, 0; green, 0; blue, 0 }  ,fill opacity=1 ]   (102.14,44.96) -- (122.38,64.03) ;
\draw [fill={rgb, 255:red, 208; green, 2; blue, 27 }  ,fill opacity=1 ]   (101.53,103.48) -- (112.49,84.01) ;
\draw  [fill={rgb, 255:red, 0; green, 0; blue, 0 }  ,fill opacity=1 ] (119.8,64.03) .. controls (119.8,62.62) and (120.95,61.48) .. (122.38,61.48) .. controls (123.81,61.48) and (124.96,62.62) .. (124.96,64.03) .. controls (124.96,65.43) and (123.81,66.57) .. (122.38,66.57) .. controls (120.95,66.57) and (119.8,65.43) .. (119.8,64.03) -- cycle ;
\draw  [fill={rgb, 255:red, 208; green, 2; blue, 27 }  ,fill opacity=1 ] (58.74,85) .. controls (58.74,83.59) and (59.9,82.45) .. (61.33,82.45) .. controls (62.76,82.45) and (63.91,83.59) .. (63.91,85) .. controls (63.91,86.4) and (62.76,87.54) .. (61.33,87.54) .. controls (59.9,87.54) and (58.74,86.4) .. (58.74,85) -- cycle ;
\draw  [fill={rgb, 255:red, 208; green, 2; blue, 27 }  ,fill opacity=1 ] (78.46,104.56) .. controls (78.46,103.16) and (79.62,102.02) .. (81.04,102.02) .. controls (82.47,102.02) and (83.63,103.16) .. (83.63,104.56) .. controls (83.63,105.96) and (82.47,107.1) .. (81.04,107.1) .. controls (79.62,107.1) and (78.46,105.96) .. (78.46,104.56) -- cycle ;
\draw [color={rgb, 255:red, 208; green, 2; blue, 27 }  ,draw opacity=1 ][fill={rgb, 255:red, 0; green, 0; blue, 0 }  ,fill opacity=1 ]   (61.33,85) -- (81.04,104.56) ;
\draw [color={rgb, 255:red, 208; green, 2; blue, 27 }  ,draw opacity=1 ][fill={rgb, 255:red, 208; green, 2; blue, 27 }  ,fill opacity=1 ]   (61.33,85) -- (79.73,66.46) ;
\draw [color={rgb, 255:red, 74; green, 144; blue, 226 }  ,draw opacity=1 ]   (99.53,105.52) -- (89.62,122.96) ;
\draw  [fill={rgb, 255:red, 74; green, 144; blue, 226 }  ,fill opacity=1 ] (87.03,122.96) .. controls (87.03,121.55) and (88.19,120.42) .. (89.62,120.42) .. controls (91.04,120.42) and (92.2,121.55) .. (92.2,122.96) .. controls (92.2,124.36) and (91.04,125.5) .. (89.62,125.5) .. controls (88.19,125.5) and (87.03,124.36) .. (87.03,122.96) -- cycle ;
\draw [color={rgb, 255:red, 74; green, 74; blue, 74 }  ,draw opacity=1 ][line width=1.5]  [dash pattern={on 1.69pt off 2.76pt}]  (84.4,64.96) -- (96.3,82.04) ;
\draw [color={rgb, 255:red, 74; green, 74; blue, 74 }  ,draw opacity=1 ][line width=1.5]  [dash pattern={on 1.69pt off 2.76pt}]  (136.5,83.65) -- (122.38,64.03) ;
\draw [color={rgb, 255:red, 74; green, 74; blue, 74 }  ,draw opacity=1 ][fill={rgb, 255:red, 74; green, 74; blue, 74 }  ,fill opacity=1 ][line width=1.5]  [dash pattern={on 1.69pt off 2.76pt}]  (112.67,122.38) -- (101.53,103.48) ;

\end{tikzpicture}

\caption{Trees and polygon dissections.}
    \label{fig:treeanddissection}
\end{figure}

\begin{thm}\label{thm:t2}
Descending strictly $k$-Naples parking functions with $n$ cars are in bijection with $(2k+2)$-rooted non-crossing partitions of $[n+k+1],$ where 1 is in the root.
\end{thm}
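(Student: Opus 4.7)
The plan is to reuse the decomposition that already drove Theorems~\ref{thm:NewDescendingNaples} and \ref{thm:t1}: via Proposition~\ref{prop:SecondBinaryTree}, a descending strictly $k$-Naples parking function of length $n$ corresponds to the binary tree having a fixed ``skeleton'' of $2k+1$ nodes (the root together with its $k-1$ leftmost left descendants, its right child, and that right child's $k$ leftmost left descendants) with an ordered $(2k+2)$-tuple $(T_1,\ldots,T_{2k+2})$ of (possibly empty) binary trees hanging off the $2k+2$ free attachment sites. The sizes satisfy $|T_1|+\cdots+|T_{2k+2}|=n-k-1$. I will then compose with the classical bijection between binary trees on $i$ nodes and non-crossing partitions of $[i]$ (see \cite{Stanley}) applied slot-by-slot.

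Concretely, given such a tuple $(T_1,\ldots,T_{2k+2})$ with sizes $i_1,\ldots,i_{2k+2}$, define anchor positions $p_0=1$ and $p_j=p_{j-1}+i_j+1$ for $j=1,\ldots,2k+1$, so that $p_0<p_1<\cdots<p_{2k+1}$ are $2k+2$ elements of $[n+k+1]$ containing $1$. Declare $R:=\{p_0,p_1,\ldots,p_{2k+1}\}$ to be the root block. Within each open arc $\{p_{j-1}+1,\ldots,p_j-1\}$, which has exactly $i_j$ elements, take the non-crossing partition of $[i_j]$ associated with $T_j$ and transport it to these labels in order. The union of $R$ with all the transported blocks gives a set partition $\pi$ of $[n+k+1]$.

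The main steps are then: (i) verify $\pi$ is non-crossing --- blocks coming from different arcs use disjoint sets of labels separated by root anchors, and the root block itself forms a non-crossing ``outer'' block that contains each arc, so no two blocks can interleave; (ii) verify the root has size exactly $2k+2$ and contains $1$, which is immediate from $p_0=1$; (iii) construct the inverse map --- given a $(2k+2)$-rooted non-crossing partition of $[n+k+1]$ with $1\in R$, list the root elements as $p_0=1<p_1<\cdots<p_{2k+1}$, read off the $2k+2$ arcs between consecutive anchors (the arc ``after'' $p_{2k+1}$ must be empty, else the block containing its elements would cross the root, forcing us to include the final stretch into the overall cyclic layout; I would handle this by placing the last anchor at $p_{2k+1}=n+k+1$ or by working around the cyclic ordering), relabel each arc to $[i_j]$ order-preservingly, and recover $T_j$ from the induced non-crossing partition. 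Bijectivity in each arc is the standard binary-tree/non-crossing-partition correspondence.

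The main obstacle I expect is step (iii), specifically handling the final arc and proving that ``$1$ is in the root'' pins down the cyclic rotation uniquely, so that the decomposition into $2k+2$ ordered sub-partitions is canonical. Once this is pinned down, checking the non-crossing property reduces to the observation that an outer block whose elements are the anchors of interior sub-partitions never crosses those sub-partitions, and the map becomes a straightforward composition of bijections that have already appeared in Theorems~\ref{thm:NewDescendingNaples} and~\ref{thm:t1}.
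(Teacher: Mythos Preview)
Your proposal is correct and follows essentially the same route as the paper: decompose a descending strictly $k$-Naples parking function via Proposition~\ref{prop:SecondBinaryTree} into an ordered $(2k+2)$-tuple of binary trees, then use the root block to carve $[n+k+1]$ into $2k+2$ consecutive arcs and apply the standard binary-tree/non-crossing-partition bijection in each arc. The paper's proof is briefer (it just says ``clockwise'' and leaves the details implicit), but the content is the same.

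One small clarification on your step~(iii) worry: the arc after $p_{2k+1}$ need not be empty. Non-crossing partitions are cyclic objects, so a block supported on $\{p_{2k+1}+1,\ldots,n+k+1\}$ does \emph{not} cross the root block $\{p_0,\ldots,p_{2k+1}\}$; that stretch is simply the $(2k+2)$-th arc, carrying $T_{2k+2}$. The condition ``$1$ is in the root'' is exactly what fixes the cyclic rotation and makes the ordered list of arcs canonical, so your obstacle dissolves once you work on the circle, as you already suspected.
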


\begin{proof}
The proof for this theorem is very similar to that of Theorem \ref{thm:t1}. 
The root partitions the remaining $n-k-1$ elements into $2k+2$ subsets, where elements in each subset form a chain of consecutive elements (otherwise this overall partition would not be non-crossing).
Each of these subsets can now have a non-crossing partition of its elements that corresponds to a binary tree. 
The one starting right after the special element (could be empty) corresponds to the leftmost subtree, and clock-wise we have the ones corresponding to the remaining subtrees left to right. 

To go the other way we choose the root by selecting the element 1 and the next one will be however many nodes the first subtree has afterwards, plus 1, and so on. 
Each subtree now corresponds to a non-crossing partition of the subset it defined by its number of nodes.
\end{proof}
The process illustrated in the proof of Theorem \ref{thm:t2} is exemplified in Figure \ref{fig:treesandpartitions} for the $2$-Naples parking function $(6,6,6,5,5,2,1)$.
\begin{figure}[h]
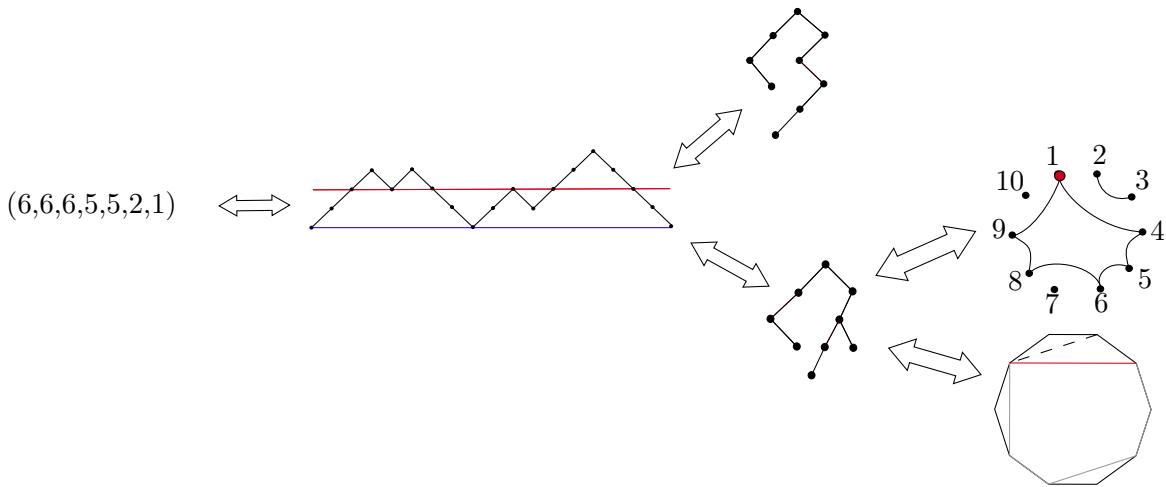

    \centering

\caption{A $k$-Naples parking function and its corresponding Catalan objects.}
    \label{fig:bijectionmap}
\end{figure}

\section{Future work}\label{sec:FutureWorks}

As we have said, all rearrangements of parking functions are still parking functions.
Using this fact, it is possible to find simple labelling rules on Dyck paths and trees that correspond to every parking function \cite{YanBook}. 
One area of future research is to explore a way of describing which rearrangements of descending $k$-Naples parking functions are still $k$-Naples parking functions based on a labelling of the objects with which they are in bijection.

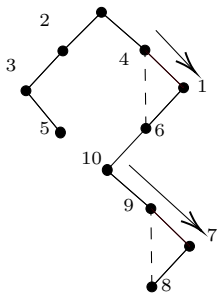
\begin{figure}[h]
    \centering
    
\begin{tikzpicture}[x=0.75pt,y=0.75pt,yscale=-1,xscale=1]

\draw  [fill={rgb, 255:red, 0; green, 0; blue, 0 }  ,fill opacity=1 ] (276.13,85.66) .. controls (276.13,84.29) and (277.24,83.18) .. (278.61,83.18) .. controls (279.98,83.18) and (281.08,84.29) .. (281.08,85.66) .. controls (281.08,87.03) and (279.98,88.14) .. (278.61,88.14) .. controls (277.24,88.14) and (276.13,87.03) .. (276.13,85.66) -- cycle ;
\draw  [fill={rgb, 255:red, 0; green, 0; blue, 0 }  ,fill opacity=1 ] (318.02,124.75) .. controls (318.02,123.38) and (319.13,122.27) .. (320.5,122.27) .. controls (321.87,122.27) and (322.98,123.38) .. (322.98,124.75) .. controls (322.98,126.12) and (321.87,127.23) .. (320.5,127.23) .. controls (319.13,127.23) and (318.02,126.12) .. (318.02,124.75) -- cycle ;
\draw  [fill={rgb, 255:red, 0; green, 0; blue, 0 }  ,fill opacity=1 ] (337.02,104.25) .. controls (337.02,102.88) and (338.13,101.77) .. (339.5,101.77) .. controls (340.87,101.77) and (341.98,102.88) .. (341.98,104.25) .. controls (341.98,105.62) and (340.87,106.73) .. (339.5,106.73) .. controls (338.13,106.73) and (337.02,105.62) .. (337.02,104.25) -- cycle ;
\draw  [fill={rgb, 255:red, 0; green, 0; blue, 0 }  ,fill opacity=1 ] (317.52,85.25) .. controls (317.52,83.88) and (318.63,82.77) .. (320,82.77) .. controls (321.37,82.77) and (322.48,83.88) .. (322.48,85.25) .. controls (322.48,86.62) and (321.37,87.73) .. (320,87.73) .. controls (318.63,87.73) and (317.52,86.62) .. (317.52,85.25) -- cycle ;
\draw [fill={rgb, 255:red, 0; green, 0; blue, 0 }  ,fill opacity=1 ]   (339.5,104.25) -- (320.5,124.75) ;
\draw [fill={rgb, 255:red, 208; green, 2; blue, 27 }  ,fill opacity=1 ]   (320,85.25) -- (339.5,104.25) ;
\draw    (320.5,124.75) -- (301,145.75) ;
\draw  [fill={rgb, 255:red, 0; green, 0; blue, 0 }  ,fill opacity=1 ] (298.52,145.75) .. controls (298.52,144.24) and (299.63,143.01) .. (301,143.01) .. controls (302.37,143.01) and (303.48,144.24) .. (303.48,145.75) .. controls (303.48,147.26) and (302.37,148.49) .. (301,148.49) .. controls (299.63,148.49) and (298.52,147.26) .. (298.52,145.75) -- cycle ;

\draw  [fill={rgb, 255:red, 0; green, 0; blue, 0 }  ,fill opacity=1 ] (274.92,126.84) .. controls (274.92,125.33) and (276.02,124.1) .. (277.39,124.1) .. controls (278.76,124.1) and (279.87,125.33) .. (279.87,126.84) .. controls (279.87,128.36) and (278.76,129.58) .. (277.39,129.58) .. controls (276.02,129.58) and (274.92,128.36) .. (274.92,126.84) -- cycle ;
\draw [fill={rgb, 255:red, 0; green, 0; blue, 0 }  ,fill opacity=1 ]   (298.08,66.15) -- (278.61,85.66) ;
\draw [fill={rgb, 255:red, 0; green, 0; blue, 0 }  ,fill opacity=1 ]   (260,105.75) -- (277.39,126.84) ;
\draw [fill={rgb, 255:red, 0; green, 0; blue, 0 }  ,fill opacity=1 ]   (278.61,85.66) -- (260,105.75) ;
\draw  [fill={rgb, 255:red, 0; green, 0; blue, 0 }  ,fill opacity=1 ] (295.61,66.15) .. controls (295.61,64.78) and (296.72,63.67) .. (298.08,63.67) .. controls (299.45,63.67) and (300.56,64.78) .. (300.56,66.15) .. controls (300.56,67.52) and (299.45,68.63) .. (298.08,68.63) .. controls (296.72,68.63) and (295.61,67.52) .. (295.61,66.15) -- cycle ;
\draw  [fill={rgb, 255:red, 0; green, 0; blue, 0 }  ,fill opacity=1 ] (257.52,105.75) .. controls (257.52,104.38) and (258.63,103.27) .. (260,103.27) .. controls (261.37,103.27) and (262.48,104.38) .. (262.48,105.75) .. controls (262.48,107.12) and (261.37,108.23) .. (260,108.23) .. controls (258.63,108.23) and (257.52,107.12) .. (257.52,105.75) -- cycle ;
\draw [fill={rgb, 255:red, 0; green, 0; blue, 0 }  ,fill opacity=1 ]   (298.08,66.15) -- (320,85.25) ;
\draw    (325.5,75.25) -- (344.13,95.04) ;
\draw [shift={(345.5,96.5)}, rotate = 226.74] [color={rgb, 255:red, 0; green, 0; blue, 0 }  ][line width=0.75]    (10.93,-3.29) .. controls (6.95,-1.4) and (3.31,-0.3) .. (0,0) .. controls (3.31,0.3) and (6.95,1.4) .. (10.93,3.29)   ;
\draw  [dash pattern={on 4.5pt off 4.5pt}]  (320.5,124.75) -- (320,85.25) ;
\draw  [fill={rgb, 255:red, 0; green, 0; blue, 0 }  ,fill opacity=1 ] (321.02,204.75) .. controls (321.02,203.38) and (322.13,202.27) .. (323.5,202.27) .. controls (324.87,202.27) and (325.98,203.38) .. (325.98,204.75) .. controls (325.98,206.12) and (324.87,207.23) .. (323.5,207.23) .. controls (322.13,207.23) and (321.02,206.12) .. (321.02,204.75) -- cycle ;
\draw  [fill={rgb, 255:red, 0; green, 0; blue, 0 }  ,fill opacity=1 ] (340.02,184.25) .. controls (340.02,182.88) and (341.13,181.77) .. (342.5,181.77) .. controls (343.87,181.77) and (344.98,182.88) .. (344.98,184.25) .. controls (344.98,185.62) and (343.87,186.73) .. (342.5,186.73) .. controls (341.13,186.73) and (340.02,185.62) .. (340.02,184.25) -- cycle ;
\draw  [fill={rgb, 255:red, 0; green, 0; blue, 0 }  ,fill opacity=1 ] (320.52,165.25) .. controls (320.52,163.88) and (321.63,162.77) .. (323,162.77) .. controls (324.37,162.77) and (325.48,163.88) .. (325.48,165.25) .. controls (325.48,166.62) and (324.37,167.73) .. (323,167.73) .. controls (321.63,167.73) and (320.52,166.62) .. (320.52,165.25) -- cycle ;
\draw [fill={rgb, 255:red, 0; green, 0; blue, 0 }  ,fill opacity=1 ]   (342.5,184.25) -- (323.5,204.75) ;
\draw [fill={rgb, 255:red, 208; green, 2; blue, 27 }  ,fill opacity=1 ]   (323,165.25) -- (342.5,184.25) ;
\draw [fill={rgb, 255:red, 0; green, 0; blue, 0 }  ,fill opacity=1 ]   (301.08,146.15) -- (323,165.25) ;
\draw    (312,143.25) -- (347.02,175.15) ;
\draw [shift={(348.5,176.5)}, rotate = 222.32999999999998] [color={rgb, 255:red, 0; green, 0; blue, 0 }  ][line width=0.75]    (10.93,-3.29) .. controls (6.95,-1.4) and (3.31,-0.3) .. (0,0) .. controls (3.31,0.3) and (6.95,1.4) .. (10.93,3.29)   ;
\draw  [dash pattern={on 4.5pt off 4.5pt}]  (323.5,204.75) -- (323,165.25) ;

\draw (265.5,66) node [anchor=north west][inner sep=0.75pt]   [align=left] {{\tiny 2}};
\draw (248.5,88.5) node [anchor=north west][inner sep=0.75pt]   [align=left] {{\tiny 3}};
\draw (265.5,120.5) node [anchor=north west][inner sep=0.75pt]   [align=left] {{\tiny 5}};
\draw (305.5,85.5) node [anchor=north west][inner sep=0.75pt]   [align=left] {{\tiny 4}};
\draw (345,99) node [anchor=north west][inner sep=0.75pt]   [align=left] {{\tiny 1}};
\draw (323.5,121.5) node [anchor=north west][inner sep=0.75pt]   [align=left] {{\tiny 6}};
\draw (286.5,135.5) node [anchor=north west][inner sep=0.75pt]   [align=left] {{\tiny 10}};
\draw (308,159) node [anchor=north west][inner sep=0.75pt]   [align=left] {{\tiny 9}};
\draw (350,174.5) node [anchor=north west][inner sep=0.75pt]   [align=left] {{\tiny 7}};
\draw (327,199.5) node [anchor=north west][inner sep=0.75pt]   [align=left] {{\tiny 8}};

\end{tikzpicture}

    \caption{Labelling binary trees for $k=1$}
    \label{fig:TreeLabbel}
\end{figure}

When $k=1$, we noticed some patterns when labelling trees that correspond to a $1$-Naples parking function as illustrated in Figure \ref{fig:TreeLabbel}. 
Note that in these labellings, the root is unlabeled as it does not correspond to a car, and a node must have a lower labelling then its right child. 
Now, for the patterns, if a direct right descendent of the root does not have a left child, then it must have a higher labelling then its right child. 
Once this ends, we look at the final direct right descendent in the section without a left child. 
We already know it must have a higher label than its right child, so now look at the right child that is connected to the original node in Figure \ref{fig:TreeLabbel} by a dotted line.
If the grandchild has a smaller than the original, there is no problem here with the rearrangement. Otherwise, the process must begin again.
We can see this is a rather convoluted process, and neither of the rules follow in a satisfying way once $k>1$. 

A similar area of study is to find what labelling conventions correspond to ascending $k$-Naples parking functions using our other bijections. 
We have seen the result for both Dyck paths and binary trees, but we have not studied the condition on dissections or rooted non-crossing partitions.
Perhaps on these the condition is relatively nice and could help us understand rearrangements more.

Lastly, there are many objects counted by the Catalan numbers and their convolutions that we have not discussed here. 
Finding and understanding more bijections could help us better understand the structure of $k$-Naples parking functions, and some objects may be better suited for describing rearrangements. 
One could also look for bijections for ascending strictly $k$-Naples parking functions, ascending or descending parking preferences that are not $k$-Naples parking functions, or descending $k$-Naples parking functions whose ascending rearrangements are not $k$-Naples parking functions.

\section*{Acknowledgements}
Part of this research was performed with support from the Institute for Pure and Applied Mathematics (IPAM), which is supported by the National Science Foundation (Grant No.\ DMS-1440415).
PEH was supported through a Karen EDGE Fellowship. 
ARVM was partially supported by the National Science Foundation under Awards DGE-1247392, KY-WV LSAMP Bridge to Doctorate HRD-2004710, and DMS-2102921.

\bibliography{bibliography}

\end{document}